% Created 2022-02-04 Fri 11:54
% Intended LaTeX compiler: pdflatex
\documentclass{amsart}
\usepackage{texfrag}
\pdfoutput=1
\usepackage{header}
\usepackage{macros}
\usepackage{feedbackdiagrams}
\usepackage{opendiagrams}
\usepackage{hiragana}
\usepackage{resizegather}
\allowdisplaybreaks

\hyphenation{mo-noi-dal}
\author{Mario Román}
\address{Tallinn University of Technology\\Tallinn, Estonia}
\email{mroman@ttu.ee}
\thanks{Mario Román was supported by the European Union through the ESF funded Estonian IT Academy research measure (project 2014-2020.4.05.19-0001).}
\date{}
\title{Open Diagrams via Coend Calculus}
\begin{document}

\maketitle
\begin{abstract}
Morphisms in a monoidal category are usually interpreted as \emph{processes},
and graphically depicted as square boxes. In practice, we are faced with the
problem of interpreting what \emph{non-square boxes} ought to represent in
terms of the monoidal category and, more importantly, how should they be
composed. Examples of this situation include \emph{lenses} or \emph{learners}.
We propose a description of these non-square boxes, which we call \emph{open}
\emph{diagrams}, using the monoidal bicategory of profunctors. A graphical coend
calculus can then be used to reason about open diagrams and their
compositions.
\end{abstract}

\section{Introduction}
\label{sec:orgdd7751b}
\label{sec_introduction}

\subsection{Open Diagrams}
\label{sec:orgb9d323b}
\label{sec:opendiagrams}

Morphisms in monoidal categories are interpreted as processes with inputs and
outputs and generally represented by square boxes. This interpretation, however,
raises the question of how to represent a process that does not consume all the
inputs at the same time or a process that does not produce all the outputs at
the same time. For instance, consider a process that consumes an input, produces
an output, then consumes a second input and ends producing an output.
Graphically, we have a clear idea of how this process should be represented,
even if it is not a morphism in the category.

\begin{figure}[H]
\centering
\theOpticIntro
\captionsetup{width=.8\linewidth}
\caption{A process with a non-standard shape. The input $A$ is taken at the
  beginning, then the output $X$ is produced, strictly after that, the input $Y$
  is taken; finally, the output $B$ is produced.}
\label{fig:opticintro}
\end{figure}

Reasoning graphically, it seems clear, for instance, that we should be able to
\emph{plug} a morphism connecting the first output to the second input inside
this process and get back an actual morphism of the category.

\begin{figure}[H]
\centering
\completedOptic
\captionsetup{width=.8\linewidth}
\caption{It is possible to plug a morphism $f \colon X \to Y$ inside the
  previous process (Figure \ref{fig:opticintro}), and, importantly, get back a
  morphism $A \to B$.}
\label{fig:plugging}
\end{figure}

The particular shape depicted above has been studied by \cite{riley18}
under the name of (monoidal) \emph{optic}; it can be also called a \emph{monoidal lens}; and
it has applications in bidirectional data accessing
\cite{pickering17,boisseau18,kmett15} or compositional game theory \cite{ghani18}. A
multi-legged generalization has appeared also in quantum circuit design
\cite{chiribella08} and quantum causality \cite{uijlen17} as a notational
convention, see \cite{roman:combfeedback}. It can be shown that boxes of that
particular shape should correspond to elements of a suitable \emph{coend} (Figure
\ref{fig:coendoptic}, see also \S \ref{sec:coends} and \cite{milewski17,riley18}). The
intuition for this coend representation is to first consider a tuple of
morphisms, and then quotient out by the equivalence relation generated by
sliding morphisms along connected wires.

\begin{figure}[H]
\centering
$\openOpticLeft\quad \sim \quad\openOpticRight$
\captionsetup{width=.8\linewidth}
\caption{A box of this shape is meant to represent a pair of morphisms in a
  monoidal category quotiented out by "sliding a morphism" over the upper wire.}
\label{fig:coendoptic}
\end{figure}

It has remained unclear, however, how this process should be carried in full
generality and if it was on solid ground. Are we being formal when we use these
\emph{open} or \emph{incomplete} diagrams? What happens with all the other
possible shapes that one would want to consider in a monoidal category? In
general, we cannot assume that they are squares. For instance, the second of the shapes in
Figure \ref{fig:manyshapes} has three inputs and two outputs, but the first
input cannot affect the last output; and the last input cannot affect the first
output.\footnote{This particular shape comes from a question by Nathaniel Virgo.}

\begin{figure}[H]
\centering
\donutdiagram \qquad \nathanieldiagram
\captionsetup{width=.8\linewidth}
\caption{Some other shapes for boxes in a monoidal category.}
\label{fig:manyshapes}
\end{figure}

This article presents the idea that incomplete diagrams should be interpreted as
valid diagrams in the monoidal bicategory of profunctors; and that compositions
of incomplete diagrams correspond to reductions that employ the monoidal
bicategory structure. At the same time, this amounts to a graphical presentation
of \emph{coend calculus}.

\subsection{Coend calculus}
\label{sec:org20d435e}
\label{sec:coends}
Coends are particular cases of colimits and \emph{coend calculus} is a practical
formalism that uses Yoneda reductions to describe isomorphisms between them.
Their dual counterparts are \emph{ends}, and formalisms for both interact nicely
in a \emph{(Co)End calculus} \cite{loregian19}.

\begin{definition}
The \textbf{coend} \(\int\nolimits^{X \in \C} P(X,X)\) of a profunctor
\(P \colon \C^{op} \times \C \to \Set\) is the coequalizer of the action of morphisms on
both arguments of the profunctor.
\[ \int^{X \in \C} P(X,X) \cong \mathrm{coeq}
  \left( \begin{tikzcd} \bigsqcup_{f \colon B \to A} P(A,B) \rar[yshift=-0.5ex, swap] \rar[yshift=0.5ex] &
  \bigsqcup_{X \in \C} P(X,X)
  \end{tikzcd}\right).
\]
An element of the coend is an equivalence class of pairs \([X, x \in P(X,X)]\) under the
equivalence relation generated by \([X, P(f,\id_X)(p)] \sim [Y, P(\id_Y,f)(p)]\) for each
\(f : X \to Y\).
\end{definition}

Our main idea is to use these equivalence relations to deal with the
quotienting arising in non-square monoidal boxes.

\begin{figure}[H]
\centering
$\openOpticLeft\quad \sim\quad \openOpticRight$

\vspace{1em}

$\displaystyle \int^M \C(A, M \otimes X) \times \C(M \otimes Y, B).$
\captionsetup{width=.8\linewidth}
\caption{We can go back to Figure \ref{fig:coendoptic} to
  check how it coincides with the quotienting arising from a
  coend.}
\label{fig:coendopticwithactualcoend}
\end{figure}

\subsection{Contributions}
\label{sec:org3f42f3e}
Our first contribution is a graphical calculus of \emph{shapes} of open diagrams (\S
\ref{sec:shapes}), with semantics on the monoidal bicategory of profunctors, and
with an emphasis on representing monoidal structures. We show how to compose and
simplify shapes (\S \ref{sec:composing}). Our second contribution is a graphical
calculus of \emph{open diagrams,} in terms of the category of pointed profunctors,
and hinting at a pseudofunctorial analogue of \emph{functor boxes} \cite{mellies06} (\S
\ref{sec:opendiagrams}).

As examples, we recast the multiple ways of composing \emph{monoidal lenses} and other coend
constructions on the literature on optics (\S \ref{sec:lenses}). We also study categories with
feedback (\S \ref{sec:feedback}).

\newpage

\section{Shapes of Open Diagrams}
\label{sec:orgd92570d}
\label{sec:shapes}
In the same sense that morphisms sharing the same domain and codomain are
collected into a hom-set; open diagrams sharing the same \emph{shape} will be also collected
into a set. Our first step is a graphical language for shapes and a
compositional interpretation that assigns a set to each shape (which we
anticipate in Figure \ref{fig:coend:shapes}).

\begin{figure}[H]
  \centering
  \begin{gather*}
  \scalebox{0.85}{\donutAsShape} \quad \cong \quad \int^{M,N} \C(A, M \otimes X \otimes N) \times \C(M \otimes Y \otimes N, B), \\
  \scalebox{0.85}{\nathanielexample} \quad \cong \quad \int^{M,N} \C(I_0,M \otimes N) \times \C(I_1 \otimes M, O_1) \times \C(N \otimes I_2 , O_2).
  \end{gather*}
  \captionsetup{width=0.9\linewidth}
  \caption{The shapes of Figure \ref{fig:manyshapes}, abstracted as string diagrams, define sets.}
  \label{fig:coend:shapes}
\end{figure}

\subsection{String Diagrams}
\label{sec:orgdedbfcb}
Shapes are closed string diagrams in \(\Prof\), the monoidal bicategory of
profunctors \cite[\S 5]{loregian19}. \emph{Wires} represent small categories \((\A,\B,\C,\dots)\);
when unlabelled, they are understood to represent some fixed category.
\emph{Diagrams} with input \(\A\) and output \(\B\) are profunctors \(\A^{op} \times \B \to \Set\).
\emph{Deformations} are natural transformations. \emph{Sequential composition} of diagrams with
matching wires composes two profunctors \(P \colon \A^{op} \times \B \to \Set\) and \(Q \colon \B^{op} \times \C \to \Set\)
into the profunctor \((P \diamond Q) \colon \A^{op} \times \C \to \Set\) given by
\[(P \diamond Q)(A,C) \coloneqq \int^{B \in \B} P(A,B) \times Q(B,C).\]

\emph{Parallel composition} of diagrams uses the cartesian product of categories and the terminal
category as unit. Laying two profunctors
\(P_1 \colon \A_1^{op} \times \B_1 \to \Set\) and \(P_2 \colon \A_2^{op} \times \B_2 \to \Set\) in parallel yields the profunctor
\((P_1 \otimes P_2) \colon (\A_1 \times \A_2)^{op} \times (\B_1 \times \B_2) \to \Set\) defined by
\[(P_1 \otimes P_2)(A_1,A_2,B_1,B_2) \coloneqq P_1(A_1,B_1) \times P_2(A_2,B_2).\]
As a consequence, closed string diagrams represent sets, as profunctors \(\mathbf{1}^{op} \times \mathbf{1} \to \Set\).

The string diagrammatic calculus for monoidal bicategories has been studied by Bartlett \cite{bartlett2014quasistrict} expanding on a strictification result by
Schommer-Pries \cite{schommerpries2011classification}. It is similar to the
graphical calculus of monoidal categories, with the caveat that deformations
correspond to invertible 2-cells instead of equalities. Henceforward, the symbols
\((\to)\) and \((\cong)\) between diagrams will denote natural transformations and
natural isomorphisms, respectively. It can be also seen as a ``sliced'' version of \emph{surface diagrams}.

\begin{definition}[Input and output ports]
Every object \(A \in \C\) determines two profunctors
\((\objectYonedaA) \coloneqq \C(A,-) \colon \mathbf{1}^{op} \times \C \to \Set\) and
\((\objectCoyoneda{A}) \coloneqq \C(-,A) \colon \C^{op} \times \mathbf{1} \to \Set\)
via its contravariant and covariant Yoneda embeddings.
\end{definition}

\begin{definition}[Junctions and forks]
Every monoidal category \((\C,\otimes,I)\) has a canonical pseudomonoid structure on the
monoidal bicategory \(\Prof\) given by \((\whiteMonoid) \coloneqq \C(- \otimes -
, -)\) and \((\whiteMonoidUnit) \coloneqq \C(I,-)\), and also a canonical
pseudocomonoid structure given by \((\whiteComonoid) \coloneqq \C(- , - \otimes -)\) and
\((\whiteComonoidUnit) \coloneqq \C(-,I)\).
\end{definition}

\begin{proposition}
By definition, \((\objectYoneda{I}) \cong (\whiteMonoidUnit)\) and \((\objectCoyoneda{I}) \cong (\whiteComonoidUnit)\);
moreover,
\[\monoidalyoneda\]
In general, Yoneda embeddings are pseudofunctorial.
\end{proposition}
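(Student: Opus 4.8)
The whole statement rests on the co-Yoneda lemma, so the plan is to record the density isomorphisms once and then apply them mechanically. The two ``by definition'' claims need no argument at all: unfolding the definitions of the ports and of the (co)monoid units gives $(\objectYoneda{I}) = \C(I,-) = (\whiteMonoidUnit)$ and $(\objectCoyoneda{I}) = \C(-,I) = (\whiteComonoidUnit)$ as literal equalities of profunctors $\mathbf{1}^{op} \times \C \to \Set$ and $\C^{op} \times \mathbf{1} \to \Set$, which are in particular the asserted isomorphisms.

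For the displayed ``moreover'' part I would read it as the two structural isomorphisms $((\objectYoneda{A}) \otimes (\objectYoneda{B})) \diamond (\whiteMonoid) \cong \C(A \otimes B, -)$ and $(\whiteComonoid) \diamond ((\objectCoyoneda{A}) \otimes (\objectCoyoneda{B})) \cong \C(-, A \otimes B)$, asserting that the input port of a tensor factors through the junction and the output port through the fork. First I would expand the left-hand composite using the definitions of $\diamond$ and $\otimes$ from \S\ref{sec:shapes}, turning it into the coend $\int^{M,N} \C(A,M) \times \C(B,N) \times \C(M \otimes N, -)$. Then I would apply the co-Yoneda lemma twice: integrating over $M$ collapses $\C(A,M)$ against the contravariant dependence of $\C(M \otimes N,-)$ to give $\C(A \otimes N,-)$, and integrating over $N$ then yields $\C(A \otimes B,-)$. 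The fork isomorphism is the mirror computation, using the covariant form $\int^{M} \C(M,A) \times G(M) \cong G(A)$ of the density formula against $\C(-, M \otimes N)$.

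The last clause is where the genuine content lies, and it is the step I expect to be the main obstacle: ``Yoneda embeddings are pseudofunctorial'' asks not merely for the pointwise isomorphisms above but for their coherence. I would have to check that the cells just produced are natural in $A$ and $B$---a diagram chase pushing a morphism through the two density reductions---and that they satisfy the associativity and unit coherence conditions exhibiting $A \mapsto (\objectYoneda{A})$ and $A \mapsto (\objectCoyoneda{A})$ as morphisms of pseudomonoids, respectively pseudocomonoids, into $\Prof$. My plan would be to reduce these conditions to the coherence of the co-Yoneda isomorphism together with the pseudomonoid axioms for $(\whiteMonoid, \whiteMonoidUnit)$ fixed in the previous definition; the real labour is the bookkeeping of the associator and unitor $2$-cells of $\Prof$, which I would not carry out in full but instead subsume under the standard fact that the embedding $\C \hookrightarrow \Prof$ sending functors to their companion profunctors is a monoidal pseudofunctor, of which this proposition is the object-level shadow.
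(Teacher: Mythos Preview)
Your proposal is correct and, in fact, goes further than the paper does: the paper states this proposition without proof, treating the unit identities as literally definitional and deferring the pseudofunctoriality claim to the appendix (Proposition~\ref{yoneda:pseudofunctors}), where it is again asserted rather than verified. Your plan---unfold the definitions, reduce the ``moreover'' isomorphisms to two applications of co-Yoneda, and then subsume the coherence bookkeeping under the standard fact that the companion embedding $\Cat{Cat} \hookrightarrow \Prof$ is a strong monoidal pseudofunctor---is exactly the argument the paper is gesturing at, and your identification of the coherence check as the only genuinely laborious step matches the paper's decision to omit it.
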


\subsection{Copying and discarding}
\label{sec:orgcdd64bc}
Shapes define sets in terms of coends, making them less practical for
direct manipulation. However, shapes can be reduced to more familiar
descriptions in some particular cases. For instance, if \(\C\) is cartesian monoidal, the leftmost shape of
Figure \ref{fig:simplifying} reduces to a pair of morphisms \(\C(I_0 \times I_1,
O_1)\) and \(\C(I_0 \times I_2, O_2)\). This justifies our previous
intuition, back in Figure \ref{fig:manyshapes}, that the input \(I_1\) should not be
able to affect \(O_2\), while the input \(I_2\) should not be able to affect
\(O_1\).

\begin{figure}[H]
  \centering
  \begin{gather*}
  \nathanielSimplifyZero \quad \cong \quad \nathanielSimplifyOne \quad \cong \quad \nathanielSimplifyThree
  \end{gather*}
  \captionsetup{width=.8\linewidth}
  \caption{Simplifying a diagram.}
  \label{fig:simplifying}
\end{figure}

Our second step is to justify some reductions like these in the cases of
cartesian, cocartesian and symmetric monoidal categories. Every object of
the category of profunctors has already a canonical pseudocomonoid structure
lifted from \(\Cat{Cat}\) which is given by \((\blackComonoid) \coloneqq \C(-^0,-^{1})
\times \C(-^0,-^{2})\) and \((\blackComonoidUnit) \coloneqq 1\), and also a
pseudomonoid structure given by \((\blackMonoid) \coloneqq \C(-^1,-^0) \times
\C(-^2,-^0)\), and \((\blackMonoidUnit{}) \coloneqq 1\). These two structures ``copy and discard''
representable and corepresentable functors, respectively.

\begin{proposition}[Cartesian and cocartesian]
A monoidal category is cartesian if and only if
\((\whiteComonoidUnit) \cong (\blackComonoidUnit)\) and
\((\whiteComonoid) \cong (\blackComonoid)\), i.e. the monoidal structure coincides
with the canonical one. Dually, a monoidal category is cocartesian if
and only if \((\blackMonoid) \cong (\whiteMonoid)\) and
\((\whiteMonoidUnit) \cong (\blackMonoidUnit)\).
\end{proposition}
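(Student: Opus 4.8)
The plan is to recognize each of the four profunctors in the statement as a \emph{companion} or a \emph{conjoint} of an ordinary functor, and then to reduce the two claimed isomorphisms to adjunctions by the elementary equipment-theoretic fact that, for $F \colon \A \to \B$ and $G \colon \B \to \A$, a natural isomorphism of hom-profunctors $\B(F-,-) \cong \A(-,G-)$ is exactly the datum of an adjunction $F \dashv G$. This turns the whole proposition into the two standard adjoint characterizations of products and terminal objects, and gives both implications simultaneously.

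First I would unfold the definitions. The white comultiplication $(\whiteComonoid) = \C(-,-\otimes-)$ is the conjoint of the tensor $\otimes \colon \C \times \C \to \C$, whereas the black one $(\blackComonoid) = \C(-,-) \times \C(-,-)$ is the companion of the diagonal $\Delta \colon \C \to \C \times \C$, since $(\C \times \C)(\Delta A,(B,C)) = \C(A,B) \times \C(A,C)$. Likewise $(\whiteComonoidUnit) = \C(-,I)$ is the conjoint of the unit-picking functor $I \colon \mathbf{1} \to \C$, and $(\blackComonoidUnit) = 1$ is the companion of the terminal functor $! \colon \C \to \mathbf{1}$, because $\mathbf{1}(!A,*) = 1$.

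Now I would apply the key fact. Taking $F = \Delta$ and $G = \otimes$ converts $(\whiteComonoid) \cong (\blackComonoid)$ into the adjunction $\Delta \dashv \otimes$; taking $F = \,!$ and $G = I$ converts $(\whiteComonoidUnit) \cong (\blackComonoidUnit)$ into $! \dashv I$. But $\Delta \dashv \otimes$ says precisely that $\otimes$ is the binary categorical product, and $! \dashv I$ that $I$ is terminal, which together is exactly the statement that $\C$ is cartesian; conversely, a cartesian $\C$ furnishes these two adjunctions. The cocartesian half is the formal dual: $(\whiteMonoid) = \C(-\otimes-,-)$ is the companion of $\otimes$ and $(\blackMonoid)$ the conjoint of $\Delta$, so $(\blackMonoid) \cong (\whiteMonoid)$ becomes $\otimes \dashv \Delta$ (i.e. $\otimes$ is the coproduct) and $(\whiteMonoidUnit) \cong (\blackMonoidUnit)$ becomes $I \dashv \,!$ (i.e. $I$ is initial); I would obtain this either by rerunning the argument with the variances reversed or by applying the cartesian statement to $\C^{op}$.

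I expect the only genuinely delicate point to be the parenthetical claim that the isomorphisms make the monoidal structure \emph{coincide} with the canonical one, rather than merely making $\otimes$ product-like object by object: one must check that the pseudocomonoid coherence carried by the white structure is transported to the canonical cartesian coherence. This is where I would lean on the universal property supplied by the adjunction, which determines the associators and unitors uniquely, so that the coherence is forced and no verification beyond the naturality of the companion–conjoint correspondence is required.
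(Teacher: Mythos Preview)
Your proposal is correct and takes essentially the same approach as the paper: the paper's one-line proof simply observes that the natural isomorphism $\C(X, Y \otimes Z) \cong \C(X,Y) \times \C(X,Z)$ \emph{is} the universal property of the product (and similarly for terminal objects, coproducts, and initial objects), which is exactly your adjunction $\Delta \dashv {\otimes}$ unwound at the level of hom-sets. Your companion/conjoint packaging and your remark about transporting the pseudocomonoid coherence go slightly beyond what the paper spells out, but the underlying argument is identical.
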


\begin{proof}
The natural isomorphism \(\C(X,Y \otimes Z) \cong \C(X,Y) \times \C(X,Z)\) is
precisely the universal property of the product; a similar reasoning holds for
initial objects, terminal objects and coproducts.
\end{proof}

\begin{proposition}[Symmetric monoidal]
If a monoidal category \(\C\) is \emph{symmetric} then its symmetric pseudomonoid
structure can be lifted from \(\Cat{Cat}\) to \(\Prof\). The braiding determines \(\sigma\colon
(\whiteTwistedMonoid) \cong (\whiteMonoid)\) and \(\sigma^\ast \colon (\whiteTwistedComonoid)\cong
(\whiteComonoid)\),
dual 2-cells in the bicategory \(\Prof\) that commute with unitors
and associators.
\end{proposition}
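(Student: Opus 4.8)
The plan is to exhibit the symmetric pseudomonoid as the image of the symmetric monoidal structure of $\C$ under the pseudofunctor $\Cat{Cat} \to \Prof$ that sends each functor to its covariant representable profunctor (its companion), and to realize $\sigma$ and $\sigma^\ast$ as the images of the braiding. Since this embedding is a pseudofunctor (the one underlying the pseudofunctoriality of the Yoneda embeddings asserted in the preceding proposition), it automatically carries the associator and unitors of $\C$ to those of the pseudomonoid $(\whiteMonoid)$, $(\whiteMonoidUnit)$; the only genuinely new datum to produce is the braiding 2-cell. First I would make the twisted profunctors explicit: the twist is precomposition with the cartesian symmetry of $\C \times \C$ in $\Prof$, so that $(\whiteMonoid)(A,B;C) = \C(A \otimes B, C)$ and $(\whiteTwistedMonoid)(A,B;C) = \C(B \otimes A, C)$, both regarded as profunctors $(\C \times \C)^{op} \times \C \to \Set$.

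I would then define $\sigma$ at the component $(A,B,C)$ by precomposition with the braiding, i.e. the function $\C(B \otimes A, C) \to \C(A \otimes B, C)$ sending $g \mapsto g \circ \beta_{A,B}$, where $\beta_{A,B} \colon A \otimes B \to B \otimes A$. Naturality in the three variables is immediate from the naturality of $\beta_{A,B}$ in $A$ and $B$ together with functoriality of the hom-set in $C$; invertibility follows from invertibility of $\beta$, and here the \emph{symmetry} hypothesis is used directly, since the inverse is precomposition with $\beta_{A,B}^{-1} = \beta_{B,A}$.

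Dually, for the comonoid I would take $(\whiteComonoid)(A;B,C) = \C(A, B \otimes C)$ and $(\whiteTwistedComonoid)(A;B,C) = \C(A, C \otimes B)$, and define $\sigma^\ast$ by postcomposition with $\beta_{C,B} \colon C \otimes B \to B \otimes C$, i.e. $h \mapsto \beta_{C,B} \circ h$. To see that $\sigma$ and $\sigma^\ast$ are genuinely \emph{dual} 2-cells, I would use that $(\whiteMonoid)$ and $(\whiteComonoid)$ are the companion and conjoint of the functor $\otimes$ in $\Prof$, hence adjoint; the mate of $\sigma$ under this adjunction is exactly $\sigma^\ast$. Concretely, bending the input wires of the braiding 2-cell around the companion–conjoint adjunction reproduces the braiding on the output side, which holds because both $\sigma$ and $\sigma^\ast$ are computed from the single natural isomorphism $\beta$.

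The remaining and most delicate step is compatibility with unitors and associators, which I expect to be the main obstacle. It amounts to checking that the two hexagon identities and the unit coherence of $\beta$ in $\C$ translate into the corresponding coherence 2-cells for a symmetric pseudomonoid in $\Prof$. Here I would lean on the pseudofunctoriality of the representable embedding: it carries the coherence diagrams of the symmetric monoidal category $\C$ to coherence diagrams in $\Prof$, and since the associators and unitors of the pseudomonoid are themselves the images of the associator and unitors of $\C$, the braiding axioms are precisely the equations required. The only real work is bookkeeping of the structural isomorphisms, since in a bicategory these coherences hold up to invertible 2-cell rather than strictly, so one must track the comparison cells of the pseudofunctor throughout.
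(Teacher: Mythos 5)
Your proposal is correct and takes essentially the same approach as the paper, which gives no standalone proof of this proposition but relies---via its appendix propositions that both Yoneda embeddings are strong monoidal pseudofunctors $\Cat{Cat}\to\Prof$, fully faithful on 2-cells, and that functors are left adjoints whose companion/conjoint dualities commute with natural transformations---on exactly the lifting, explicit component formulas, and mate argument that you spell out. One minor correction: invertibility of $\sigma$ needs only that $\beta$ is a braiding (any braiding is a natural isomorphism), whereas the symmetry hypothesis $\beta_{B,A}\circ\beta_{A,B}=\mathrm{id}$ is what makes the lifted pseudomonoid \emph{symmetric}, i.e.\ what yields the involution axiom for $\sigma$ on top of the hexagon coherences.
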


\subsection{Example: Lenses}
\label{sec:orgd969204}
\label{sec:lenses}

We study lenses using the graphical calculus just described.
This presents a new way of describing reductions with coend calculus
that also formalizes the intuition of lenses as \emph{diagrams with holes}.
Profunctor optics and lenses have been studied in functional
programming \cite{kmett15,milewski17,pickering17,boisseau18} for bidirectional
data accessing. The theory of optics uses coend calculus both to describe how
optics compose and how to reduce them in sufficiently well-behaved cases to
tuples of morphisms. Categories of monoidal optics and the informal interpretation of
optics as \emph{diagrams with holes} are described in \cite{riley18}.

\begin{definition}
A monoidal lens \cite[``Optic'' in Definition 2.0.1]{milewski17,pickering17,riley18}
from \(A,B \in \C\) to \(X,Y \in \C\) is an element of the following
set.
\begin{figure}[H]
\centering
$\exOptic \quad \cong \quad \displaystyle{\int^M\C(A, M \otimes X) \times \C(M \otimes Y, B)}$
\captionsetup{width=.8\linewidth}
\end{figure}
Cartesian lenses are examples of monoidal lenses that are especially
important in applications \cite{fong19,ghani18}.
\end{definition}

\begin{proposition}
In a cartesian category \(\C\), a lens
\((A,B) \to (X,Y)\) is given by a pair of morphisms \(\C(A,X)\) and
\(\C(A \times Y, B)\). In a cocartesian category, lenses are called
\emph{prisms} \cite{kmett15} and they are given by a pair of morphisms
\(\C(S, A + T)\) and \(\C(B,T)\).
\end{proposition}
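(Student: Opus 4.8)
The plan is to compute the coend defining a monoidal lens directly, specializing the generic formula
\[
\int^M \C(A, M \otimes X) \times \C(M \otimes Y, B)
\]
in the cartesian and cocartesian cases, using the reductions established in the previous proposition on cartesian and cocartesian structures. The central tool is the Yoneda lemma for coends, together with the fact (from the Cartesian/Cocartesian proposition) that in a cartesian category the white comonoid coincides with the canonical copy structure, so that $\C(X, Y \times Z) \cong \C(X, Y) \times \C(X, Z)$.

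For the cartesian case, I would first observe that in a cartesian category the product $\otimes$ is $\times$ and the monoidal unit is terminal. The factor $\C(A, M \times X)$ splits, by the universal property of the product, as $\C(A, M) \times \C(A, X)$. Substituting back, the coend becomes
\[
\int^M \C(A,M) \times \C(A,X) \times \C(M \times Y, B),
\]
and since $\C(A,X)$ is constant in $M$ it factors out of the coend. The remaining coend $\int^M \C(A, M) \times \C(M \times Y, B)$ is then handled by the co-Yoneda (density) lemma: the representable $\C(A, M)$ acts as a weight that substitutes $M \mapsto A$, collapsing the integral to $\C(A \times Y, B)$. This yields $\C(A,X) \times \C(A \times Y, B)$, as claimed. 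For the cocartesian case I would dualize: the unit $I$ is initial, $\otimes$ is the coproduct $+$, and I would instead apply co-Yoneda to the covariant representable, noting that $\C(M + Y, B) \cong \C(M,B) \times \C(Y,B)$ splits off the $M$-independent factor $\C(Y,B)$; here, however, the naming convention of the statement relabels the variables, so the pair comes out as $\C(S, A + T)$ and $\C(B, T)$, and I would have to match the source/target labels carefully against the coend indices rather than merely transcribing the cartesian argument.

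The main obstacle I anticipate is bookkeeping rather than conceptual: one must be careful about which variable the co-Yoneda reduction is applied to, and whether the representable being absorbed is covariant or contravariant, since the coend variable $M$ appears on the left of one hom-set and the right of the other. A genuine subtlety is that the co-Yoneda reduction $\int^M \C(A,M) \times P(M) \cong P(A)$ requires the functor $P$ to be covariant in $M$, so I would need to confirm that $M \mapsto \C(M \times Y, B)$ is contravariant in $M$ and hence that the correct form is $\int^M \C(A,M) \times \C(M \times Y, B) \cong \C(A \times Y, B)$ via the contravariant density formula. Once the variance is pinned down, the remainder is a direct application of the previous proposition plus Yoneda, so I would keep the proof short and let the two computations run in parallel, flagging the relabeling in the cocartesian case as the only place where care is genuinely required.
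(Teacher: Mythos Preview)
Your proposal is correct and matches the paper's proof essentially step for step: the paper also splits \(\C(A, M \times X) \cong \C(A,M) \times \C(A,X)\) via the universal property of the product (equivalently \((\whiteComonoid) \cong (\blackComonoid)\)), then applies the (co)Yoneda lemma to collapse \(\int^M \C(A,M) \times \C(M \times Y, B)\) to \(\C(A \times Y, B)\), and declares the prism case dual. Your extra care about the variance of \(M\) in the Yoneda reduction and about the relabelling in the cocartesian statement is well-placed bookkeeping but does not diverge from the paper's argument.
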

\begin{proof}
We write the proof for lenses, the proof for prisms is dual and can be
obtained by mirroring the diagrams. The coend derivation can be found, for
instance, in \cite{milewski17}.
\begin{align*}
& \exOptic && \int^{M} \C(A, M \times X) \times \C(M \times Y, B) \\
\cong & \hint{(\whiteComonoid) \cong (\blackComonoid)} & \cong & \hint{\mbox{Universal property of the product}}\\
& \proofCartesianTwo && \int^{M} \C(A, M) \times \C(A,X) \times \C(M \times Y, B) \\
\cong & \hint{\mbox{Copy}} & \cong & \hint{\mbox{Yoneda lemma}} \\
& \proofCartesianThree && \C(A,X) \times \C(A \times Y, B) & \qedhere \\
\end{align*}
\end{proof}

\subsection{Example: Feedback}
\label{sec:org4d82126}
\label{sec:feedback}

Shapes do not need to be limited to a single category. For instance, we can make
use of the opposite category to introduce feedback, in the sense of the \emph{categories}
\emph{with feedback} of \cite{katis:feedback}.
Wires in the opposite category will be marked with an arrow to distinguish them.
\begin{figure}[H]
\centering
\begin{gather*}
\circDiagram{X}{Y} \quad \cong \quad \int^{M \in \C} \C(M \otimes X, M \otimes Y).
\end{gather*}
\caption{A shape with feedback, interpreted as a set.}
\end{figure}

\begin{proposition}[see {\cite{stay2013compact}}]
Profunctors form a compact closed bicategory. The dual of
a category is its opposite category.
\end{proposition}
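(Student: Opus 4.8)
The plan is to establish the statement in two parts. First, I would show that $\Prof$ is a compact closed bicategory by exhibiting, for each category $\A$, a dual object together with the required unit and counit. Second, I would identify the dual explicitly as the opposite category $\A^{op}$ and verify the triangle (zig-zag) coherence conditions that a compact closed structure demands.

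For the first part, I would recall that compact closure in a (symmetric monoidal) bicategory means that every object $\A$ comes equipped with a dual $\A^\ast$ and a pair of 1-cells, a unit $\eta \colon \mathbf{1} \to \A^\ast \otimes \A$ and a counit $\varepsilon \colon \A \otimes \A^\ast \to \mathbf{1}$, satisfying the snake equations up to coherent invertible 2-cells. I would take $\A^\ast \coloneqq \A^{op}$. The unit and counit are the natural candidates built from the hom-profunctor: the counit is the hom-profunctor $\C(-,-) \colon (\A \times \A^{op})^{op} \times \mathbf{1} \to \Set$, viewed as a diagram $\A \otimes \A^{op} \to \mathbf{1}$, and the unit is the same hom-profunctor read as a diagram $\mathbf{1} \to \A^{op} \otimes \A$. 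These are exactly the corepresentable and representable pairings, so in the string-diagrammatic calculus introduced above they are the cup and cap for the wire labelled $\A$.

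The key computational step is then the verification of the zig-zag identities. One composes the unit and counit along a shared wire and checks that the result is isomorphic to the identity profunctor. Concretely, using the formula for sequential composition one computes a composite of the form
\[
\int^{X \in \A} \A(A, X) \times \A(X, B),
\]
which is isomorphic to $\A(A,B)$, the identity profunctor on $\A$, by the co-Yoneda lemma. This is precisely the Yoneda reduction that collapses a cup-cap zig-zag to a straight wire, so the snake equation holds up to the canonical invertible 2-cell supplied by co-Yoneda. The dual zig-zag, threading the wire through $\A^{op}$, follows by the same computation read in $\A^{op}$, i.e. by mirroring the diagram.

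The main obstacle is not the object-level computation, which is a routine co-Yoneda reduction, but the bicategorical coherence: one must check that the invertible 2-cells witnessing the two snake equations are themselves coherent, that is, that they satisfy the swallowtail (and associated) axioms required of a compact closed \emph{bi}category rather than a mere $1$-category. Handling this carefully is what the cited work \cite{stay2013compact} does, so in practice I would invoke that reference for the coherence data and restrict the present argument to identifying $\A^\ast = \A^{op}$ and exhibiting the unit and counit as the hom-profunctor cups and caps, with the co-Yoneda reduction displayed above as the content of the zig-zag.
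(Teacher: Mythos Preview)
Your proposal is correct and matches the paper's approach: the paper gives no proof in the main text (only the citation), and in the appendix it sketches exactly what you outline---the dual is $\A^{op}$, the cups and caps are variations of the hom-profunctor, the snake equations reduce to the (co-)Yoneda lemma, and the full coherence data is deferred to \cite{stay2013compact}. If anything, your write-up is more explicit than the paper's own sketch.
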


\section{Composing and Reducing Shapes}
\label{sec:org1e2fca3}
\label{sec:composing}
We have been focusing on the invertible transformations between shapes, but
arguably the most interesting case is that of non-invertible transformations.
Our next step is to describe rules for composing and reducing diagrams that can be translated to valid coend calculus reductions.

For instance, as we saw in the
introduction (Figure \ref{fig:plugging}), a lens \((A,B) \to (X,Y)\) can be composed with a morphism \(X \to Y\) to obtain a morphism \(A \to B\).

\begin{figure}[H]
\centering
\begin{align*}
  & \exReductionOne & & \left( \int^M \C(A, M \otimes X) \times \C(M \otimes Y, B) \right) \times \C(X,Y)  \\
  \cong & \hint{\mbox{Isotopy}} & \cong  & \hint{\mbox{Continuity}} \\
  & \exReductionTwo & & \int^M \C(A, M \otimes X) \times \C(X,Y) \times \C(M \otimes Y, B)  \\
  \to & \hint{\varepsilon_X} & \to & \hint{\mbox{Composition along $X$}} \\
  & \exReductionThree & & \int^M \C(A, M \otimes Y) \times \C(M \otimes Y, B) \\
  \to & \hint{\varepsilon_Y} & \to & \hint{\mbox{Composition along $Y$}}\\
  & \exReductionFour & & \int^{M,N} \C(A, M \otimes N) \times \C(M \otimes N, B)\\
  \to & \hint{\varepsilon_{\otimes}}  & \to & \hint{\mbox{Composition along $M \otimes N$}} \\
  & \exReductionFive & & \C(A, B)
\end{align*}
\caption{Composing a lens with a morphism, formalizing Figure \ref{fig:plugging}.}
\captionsetup{width=.8\linewidth}
\end{figure}

\begin{definition}[Joining and splitting wires]
Identities and composition define natural transformations
\(\eta_{A} \colon (\ \ ) \to (\objectYoneda{A} \mkern-2mu \objectCoyoneda{A})\) and \(\varepsilon_A \colon (\objectCoyoneda{A} \objectYoneda{A}) \to (\tinyWire)\).
They determine an adjunction, as the following transformations are identities.
\[(\objectCoyoneda{A}) \overset{\eta}\to (\objectCoyoneda{A}\ \objectYoneda{A} \mkern-2mu \objectCoyoneda{A})
\overset{\varepsilon}\to (\objectCoyoneda{A}); \qquad
(\objectYoneda{A}) \overset{\varepsilon}\to (\objectYoneda{A} \mkern-2mu \objectCoyoneda{A}\ \objectYoneda{A})
\overset{\eta}\to (\objectYoneda{A}).
\]
In the same vein, junctions and forks have natural transformations
\(\varepsilon_{\otimes}\colon (\whiteMonoidBubble)\to (\tinyWire)\) and
\(\eta_\otimes \colon (\twoTinyWires)\to (\WhiteCobubble)\). They determine an
adjunction, as the following transformations are identities.
\[(\whiteMonoid) \overset{\eta}\to
(\whiteMonoid \mkern-2mu \whiteComonoid \mkern-2mu \whiteMonoid) \overset{\varepsilon}\to
(\whiteMonoid); \qquad
(\whiteComonoid) \overset{\eta}\to
(\whiteComonoid\mkern-2mu \whiteMonoid \mkern-2mu \whiteComonoid) \overset{\varepsilon}\to
(\whiteComonoid).\]
\end{definition}

\subsection{Example: Categories of Optics}
\label{sec:orgfb0badf}
\label{example:categoriesofoptics}

\begin{figure}[H]
\centering
\scalebox{0.85}{\parbox{\linewidth}{
\begin{align*}
& \opticscomposeOne && \relensOne\\
\cong && \cong \\
& \opticscomposeTwo && \relensTwo\\
\to & \hint{\varepsilon_X} & \to & \hint{\varepsilon_X} \\
& \opticscomposeThree && \relensThree\\
\to & \hint{\varepsilon_Y} & \to & \hint{\varepsilon_Y} \\
& \opticscomposeFour && \relensFour\\
\to & \hint{\alpha} & \to & \hint{\alpha} \\
& \opticscomposeFive && \relensFive\\
\to & \hint{\varepsilon_{\otimes}} & \cong & \hint{\mbox{$\sigma$, symmetry}} \\
& \opticscomposeSix && \relensSix \\
&& \to & \hint{\varepsilon_{\otimes}} \\
&&& \opticscomposeSix
\end{align*}
}}
\caption{In parallel, two possible compositions of optics.}
\label{fig:twoopticcompositions}
\end{figure}
Two lenses of types \((A,B) \to (X,Y)\) and \((X,Y) \to (U,V)\) can be
composed with each other to form a \emph{category of optics} \cite{riley18} (Figure \ref{fig:twoopticcompositions}). There is,
however, another way of composing two lenses. When
the base category is symmetric, a lens \((A,Y) \to (X,V)\) can be
composed with a lens \((X,B) \to (U,Y)\) into a lens \((A,B) \to (U,Y)\). We will observe that, even if \(\Prof\)
is symmetric, the reduction explicitly uses symmetry on the base category \(\C\).

\subsection{Example: from Lenses to Dynamical Systems}
\label{sec:org2fbc70e}
In \cite[Definition 2.3.1]{schultz16}, a discrete dynamical system, a
Moore machine, is characterized to have the same data as a lens
\((A,A) \to (X,Y)\). The following derivation is a conceptual justification of
this coincidence: a lens with suitable types can be made into a morphism of the
free category with feedback \cite{katis:feedback}, subsuming particular cases
such as \emph{Moore machines}.

\begin{figure}[H]
\begin{align*}
& \openLens && \int^{M} \C(A,M \otimes X) \times \C(M \otimes Y,A) \\
\cong & \hint{\mbox{Isotopy}} &\cong & \hint{\mbox{Commutativity of $(\times)$}} \\
& \openLensTwo && \int^{M} \C(M \otimes Y,A) \times \C(A,M \otimes X) \\
\to & \hint{\varepsilon_{A}} &\to & \hint{\mbox{Composition along $A$}} \\
& \circDiagram{Y}{X} && \int^{M} \C(M \otimes Y, M \otimes X)
\end{align*}
\caption{From lenses to dynamical systems.}
\end{figure}

\section{Open Diagrams}
\label{sec:org77550e8}
\label{sec:opendiagrams}

Our final contribution is to justify how to obtain the diagrams that originally
motivated this article (\emph{open diagrams}) by ``looking inside'' the
shapes. So far, the element of a set described by a shape could be only
expressed as a derivation of the shape from the empty diagram. In this section,
we show diagrams that summarize these derivations and that represent specific
elements of the shape.

\begin{figure}[H]
\begin{gather*}
\emptyDiagram \quad \overset{f,g}\to \quad\lensInternalTwo \quad \overset{\varepsilon_M}\to\quad \lensInternalOne
\end{gather*}
\caption{Open diagrams represent specific elements.}
\end{figure}

\subsection{Open Diagrams}
\label{sec:orga21958e}
Open diagrams will be interpreted in \(\ProfAst\), the symmetric monoidal bicategory of pointed profunctors.
Its 0-cells are categories with a chosen object; its 1-cells from \((\A,X)\) to \((\B,Y)\)
are profunctors \(P \colon \A^{op} \times \B \to \Set\) with a chosen point \(p \in P(X,Y)\); and its 2-cells
are natural transformations preserving that chosen point. The point will keep track of a
specific element of the shape.

\begin{proposition}
\label{prop:liftreductions}
Reductions on shapes can be lifted to reductions on open diagrams.
\end{proposition}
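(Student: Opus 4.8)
The plan is to make precise what a ``reduction on shapes'' is, and then exhibit a systematic way of transporting the chosen point through such a reduction. Recall that a shape is a closed string diagram in $\Prof$, i.e.\ a profunctor $\mathbf{1}^{op} \times \mathbf{1} \to \Set$, which is just a set; a reduction on shapes is a $2$-cell (natural transformation) in $\Prof$ between two such diagrams, built from the generating transformations $\eta$, $\varepsilon$, $\sigma$, associators and unitors that we have already introduced. An open diagram is the same data lifted to $\ProfAst$: a pointed profunctor, where the point tracks a specific element of the set. So the statement amounts to saying that the forgetful pseudofunctor $U \colon \ProfAst \to \Prof$ admits enough structure that every generating $2$-cell of $\Prof$ has a canonical lift once a point is chosen on its domain.

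First I would set up the forgetful pseudofunctor $U \colon \ProfAst \to \Prof$ and observe that it is, by construction, locally faithful: a $2$-cell of $\ProfAst$ is a $2$-cell of $\Prof$ satisfying an extra \emph{point-preservation} equation. The key point is that this equation does not \emph{constrain} which underlying $2$-cells exist; it \emph{determines} the lifted point on the codomain. Concretely, given a reduction $\phi \colon P \to Q$ between shapes and a chosen element $p \in P$ (viewing $P,Q$ as sets), the component $\phi_{\ast,\ast} \colon P \to Q$ is itself a function of sets, so I would simply define the transported point to be $q \coloneqq \phi(p) \in Q$. Point-preservation then holds by definition, and this is exactly the assignment depicted in the open-diagram derivation, where the point flows along the reduction.

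The main step is to check that this lifting respects the compositional structure used to build reductions, so that the lift of a composite reduction agrees with the composite of the lifts, and the lift of an invertible reduction is invertible. For this I would verify that each generator appearing in our derivations --- the cups and caps $\eta_A,\varepsilon_A,\eta_\otimes,\varepsilon_\otimes$ of Definition on joining and splitting wires, the symmetries $\sigma,\sigma^\ast$, and the structural isomorphisms --- is a morphism of sets when the shapes are closed, and hence transports points functorially. Because sequential and parallel composition of shapes are computed as coends and products of sets, and because a point in a composite is (an equivalence class of, resp.\ a tuple of) points in the factors, naturality of the generating transformations guarantees that chosen points compose coherently. I expect the bookkeeping of coherence --- that the lifted $2$-cells satisfy the pseudofunctoriality and interchange equations of $\ProfAst$ up to the same invertible structural cells as in $\Prof$ --- to be the most delicate part, and the place where one must appeal to $U$ being a (strict or pseudo) pseudofunctor rather than merely a map on underlying data.

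The hard part will therefore be not the existence of the lift, which is essentially forced, but the \emph{coherence}: confirming that the point-preservation condition is stable under all the relations among $2$-cells in the monoidal bicategory $\Prof$, so that no derivation can transport the same starting point to two genuinely different endpoints. I would handle this by observing that, since a closed shape is a set and a reduction between closed shapes is literally a function, two derivations that are equal as $2$-cells in $\Prof$ are equal as functions, and hence transport any chosen point identically; the pointed structure adds no new relations but merely records the output. This reduces the coherence obligation to coherence already available in $\Prof$, completing the argument.
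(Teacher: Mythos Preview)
Your proposal is correct and follows essentially the same approach as the paper: introduce the forgetful pseudofunctor $U \colon \ProfAst \to \Prof$ and lift a $2$-cell $\alpha \colon P \to Q$ to $\alpha_\ast \colon (P,p) \to (Q,\alpha(p))$, with coherence following because equal $2$-cells in $\Prof$ are equal functions on points. The paper compresses your coherence discussion into the single observation that this unique lifting exhibits each $\ProfAst((\A,A),(\B,B)) \to \Prof(\A,\B)$ as a \emph{discrete opfibration}, which is exactly the packaging of ``every $2$-cell lifts uniquely once a lift of its domain is chosen'' that you unfold by hand.
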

\begin{proof}
There exists a pseudofunctor \(\U \colon \ProfAst \to \Prof\) that forgets about the specific
point.  It holds that \(a \in A\) for every element \((A,a) \in \ProfAst((\mathbf{1},1),(\mathbf{1},1))\).
Natural transformations \(\alpha \colon P \to Q\) can be lifted to \(\alpha_{\ast} \colon (P,p) \to (Q,\alpha(p))\)
in a unique way, determining a discrete opfibration \(\ProfAst((\A,A),(\B,B)) \to \Prof(\A,\B)\) for every pair
of pointed categories \((\A,A)\) and \((\B,B)\).
\end{proof}

\begin{proposition}
Diagrams on the base category can be lifted to open diagrams.
\end{proposition}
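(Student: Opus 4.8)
The plan is to promote a diagram on the base category $\C$ into a diagram in $\ProfAst$ by equipping the associated representable profunctor with its canonical chosen point. Concretely, a morphism $f \colon A \to B$ in $\C$ determines the hom-profunctor $\C(A,-) \diamond \C(-,B)$, or more simply the representable $\C(A,B)$ viewed through the Yoneda machinery of the earlier Input-and-output-ports definition; the point distinguishing this profunctor is precisely the element $f \in \C(A,B)$ itself. More generally, any diagram in $\C$ assembled from identities, composition, and the monoidal product already names a specific morphism, and that named morphism \emph{is} the canonical point. So the lifting sends a base diagram to the same profunctor-diagram it would have in $\Prof$, now carrying the element it picks out as its basepoint.

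First I would make precise what ``diagram on the base category'' means in this setting: it is a string diagram built from the generators introduced earlier --- the Yoneda ports $(\objectYonedaA)$ and $(\objectCoyoneda{A})$, the junctions and forks $(\whiteMonoid)$, $(\whiteComonoid)$ and their units, together with sequential composition $\diamond$ and parallel composition $\otimes$. Each such generator is a representable or corepresentable profunctor, hence comes with a canonical element (an identity morphism, or the structure morphism of the pseudomonoid). Second, I would check that both composition operations of $\Prof$ lift to $\ProfAst$ on points: given pointed profunctors $(P,p)$ and $(Q,q)$, the coend defining $P \diamond Q$ contains the class $[B, (p,q)]$ whenever the types match, giving a canonical point of the composite, and the parallel product simply pairs the two points. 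This is exactly the monoidal bicategory structure of $\ProfAst$ whose existence was asserted when that bicategory was introduced, so I would invoke it rather than reconstruct it.

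Third, I would verify naturality and compatibility: the assignment must respect the deformations (2-cells) already available in $\Prof$, and here Proposition~\ref{prop:liftreductions} does the heavy lifting, since it exhibits the forgetful pseudofunctor $\U \colon \ProfAst \to \Prof$ as a discrete opfibration on hom-categories. Consequently, once the starting basepoint is fixed, every reduction of the underlying shape lifts uniquely, and the lift of a base diagram is determined by the lift of its generators together with functoriality of $\U$. In effect, the proposition is the statement that the canonical-point assignment is a section of $\U$ over the subbicategory generated by representables.

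The main obstacle I expect is purely coherence-theoretic rather than conceptual: one must confirm that the canonical points chosen on the generators are genuinely preserved by the associators and unitors of $\ProfAst$, i.e.\ that the basepoint data assembles into a pseudofunctor and not merely a families-of-points assignment. Because $\U$ forgets points and the fibres are discrete, any two lifts of the same underlying coherence 2-cell that agree on source basepoints must agree on target basepoints; so the coherence conditions descend from those already verified in $\Prof$ for Yoneda embeddings (the pseudofunctoriality recorded in the earlier proposition, $\monoidalyoneda$). The remaining care is in matching the point produced by $\varepsilon$- and $\eta$-type transformations --- the joining and splitting of wires --- with the identities and composites they encode, which is routine once the coend representative of each point is written out explicitly.
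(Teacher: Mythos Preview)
Your route is different from the paper's, and the difference matters for what the proposition is meant to deliver. The paper's proof is a direct one-line construction: it exhibits a pseudofunctor $\C \to \ProfAst$ sending each object $A$ to the pointed category $(\C,A)$ and each morphism $f \in \C(A,B)$ to the pointed profunctor $(\hom_{\C}, f)$, where $\hom_{\C}$ is the \emph{full} hom-profunctor $\C^{op}\times\C\to\Set$ (not the closed shape $\C(A,-)\diamond\C(-,B)$). When $\C$ is monoidal this pseudofunctor is lax and oplax monoidal with oplaxators left adjoint to laxators (``op-ajax''). The key point is that the 0-cells in the image are $(\C,A)$, not $(\mathbf{1},1)$: the base category rides along as the ambient 0-cell, and this is precisely what licenses drawing the pseudofunctorial \emph{box} whose interior is governed by $\C$ and whose boundary carries the laxators --- the graphical device on which the rest of the section rests.

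Your proposal instead factors through the shape formalism: you image $\C$-diagrams into $\Prof$ via Yoneda ports and junctions (so $f\colon A\to B$ becomes the closed 1-cell $\mathbf{1}\to\mathbf{1}$ given by $\C(A,-)\diamond\C(-,B)$), and then lift by supplying the canonical point. Your use of the discrete-opfibration property of $\U$ to transport coherence is sound, so this is not incorrect as far as it goes; but it produces pointed \emph{closed} shapes rather than a pseudofunctor out of $\C$, and in particular it does not directly recover the lax/oplax monoidal structure or the functor-box notation. The paper's construction is both shorter and delivers more structure; your approach would need an additional step identifying your section with the image of the paper's pseudofunctor after capping with Yoneda ports.
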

\begin{proof}
Let \(\C\) be a small category. There exists a pseudofunctor \(\C \to
\ProfAst\) sending every object \(A \in \C\) to the 0-cell pair \((\C,A)\) and
every morphism \(f \in \C(A,B)\) to the 1-cell pair \((\hom_{\C}, f)\). Moreover,
when \((\C,\otimes,I)\) is monoidal, the pseudofunctor is lax and oplax monoidal
(weak pseudofunctor in \cite{moeller18}), with oplaxators being left adjoint to
laxators.

This can be called an \emph{op-ajax monoidal pseudofunctor}, following the notion
of \emph{ajax monoidal functor} from \cite{fong:regularlogic}.
\end{proof}

The graphical calculus for open diagrams can then be interpreted as the graphical
calculus of pointed profunctors enhanced with a pseudofunctorial box, in the same
vein as the functor boxes of \cite{mellies06}. Similar \emph{``internal diagrams''} have
been described before by \cite{bartlett15} as a ``graphical mnemonic notation''.

\subsection{Example: Categories of Optics}
\label{sec:org5419922}
   The lens \(\langle g,f \rangle \colon (A,B) \to (X,Y)\) is depicted as the following open diagram.
\[
\lensInternalOne \quad \in \quad \exOptic
\]
The quotienting that makes \(\langle g, (m \otimes \id_X) \circ f \rangle = \langle g \circ (m \otimes \id_Y) , f \rangle\)
is explicit in this graphical calculus. The following two diagrams are equal in
the category \(\ProfAst\): they represent the same set and the same element within it.
\[\lensOnTheLeft \quad = \quad \lensOnTheRight\]
Let us repeat an important caveat: the same diagram, after a deformation,
describes a different, although isomorphic, set. A diagram describes a set
only up to isomorphism. This raises a subtlety: we cannot speak of equality between open diagrams with different
shapes, for they belong to different sets. We could however speak of equality
between two open diagrams such that the shape of the first can be deformed into
the shape of the second. The deformation determines a particular isomorphism between the
sets defined by the shapes. Equality of elements on isomorphic sets is
understood to be equality after applying that isomorphism.

For instance, the following two elements, \((\lambda \circ f) \in \C(A,B \otimes I)\)
and \(f \in \C(A,B)\), are equal after the deformation given by counitality of the
pseudocomonoid structure.
\[
\left(\beforeTransport \quad \in \quad \beforeTransportSet \right) \quad
\overset{\{\lambda_\otimes\}}\cong \qquad               
\left(\afterTransport\quad \in\quad \afterTransportSet\right)\]

We will now use open diagrams to justify that both compositions
from Example \ref{example:categoriesofoptics}
determine a category. Consider two pairs of lenses of suitable types.
\[\scalebox{0.75}{\lensInternalLabel{A}{B}{X}{Y}{f_1}{g_1}\ \lensInternalLabel{X}{Y}{U}{V}{f_2}{g_2}}
\ \in\ \scalebox{0.75}{\opticLabelled{A}{X}{Y}{B}\ \opticLabelled{X}{U}{V}{Y}}\]
\[\scalebox{0.75}{\lensInternalLabel{A}{Y}{X}{V}{f_1'}{g_1'}\ \lensInternalLabel{X}{B}{U}{Y}{f_2'}{g_2'}}
\ \in\ \scalebox{0.75}{\opticLabelled{A}{X}{V}{Y}\ \opticLabelled{X}{U}{Y}{B}}\]
We can use Proposition \ref{prop:liftreductions} to lift the two compositions
in Example \ref{example:categoriesofoptics} to two deformations of open diagrams
 that send the two pairs of lenses to the following two open diagrams, respectively.
\[\scalebox{0.85}{\twoOpticsCompose \qquad \twoOpticsComposeTwisted}\]
Let us show that a category can be defined from the first composition.
Consider three lenses \(o_i\) for \(i = 1,2,3\).  We have two ways of
composing them, as \(o_1 \circ (o_2 \circ o_3)\) or
\((o_1 \circ o_2) \circ o_3\), but they both give rise to the same final
diagram, thanks to associativity of the base monoidal category.
The identity is the diagram on the right.
\[\tricomposedoptic \qquad \identityOpticInternal\]
For the second composition, checking associativity amounts to the following
equality. The identity is the same as in the previous case.
\begin{align*}
  \scalebox{0.9}{\firstAssocTwisted} =
  \scalebox{0.9}{\secondAssocTwisted}
\end{align*}
The graphical calculus is hiding at the same time the details of two structures.
The first is the quotient relation given by the coend in the monoidal bicategory
of profunctors; the second is the coherence of the base monoidal category inside
the pseudofunctorial box.

\section{Related and Further Work}
\label{sec:org9fb9af7}
The graphical calculus for profunctors can be seen as a direction in which the
graphical calculus for the cartesian bicategory of relations
\cite{bonchi:relational,fong:regularlogic} can be categorified. A notion of
\emph{cartesian bicategory} generalizing relations is discussed in
\cite{carboni2008cartesian}. For a slightly different future direction, we could try to relate
this work to many of the interesting applications of \emph{compact closed bicategories} (see \cite{stay2013compact}); such as
\emph{resistor networks}, \emph{double-entry bookeeping} \cite{katis2008partita} or
\emph{higher linear algebra} \cite{kapranov:higherlinear}.

Certain shapes open diagrams have been described in the literature. Specifically, finite
\emph{combs} were used as notation by \cite{chiribella08,uijlen17,riley18}; the relation
with lenses is described in \cite{roman:combfeedback}. Previous graphical calculi
for lenses and optics \cite{hedges17,boisseau19} have elegantly captured some
aspects of optics by working on the Kleisli or Eilenberg-Moore categories of the
Pastro-Street monoidal monad \cite{pastro08}. The present approach
diverges from previous formalisms on optics by focusing on the \emph{monoidal} structure of
the bicategory of profunctors, which seems to be crucial for the case of optics while
not considered by previous work (neither for arbitrary profunctors nor for Tambara modules).
It is more general than considering combs, as
it can express arbitrary shapes in non-symmetric monoidal categories. In any
case, it enables us to reason about categories of optics themselves; the results on optics
of \cite{profunctor20} can be greatly simplified in this calculus. We believe
that it is closer to, and it provides a formal explanation to the \emph{diagrams with
holes} of \cite[Definition 2.0.1]{riley18}, which were missing from previous
approaches.

Most of our first part can be repeated for arbitrary monoidal bicategories such
as enriched profunctors or spans. Multiple approaches to open systems (decorated
cospans \cite{fong2015decorated}, structured cospans \cite{baez2019structured})
could be related in this way to open diagrams, but we have not explored this
possibility yet. Another potential direction is to repeat this reasoning for the
case of double categories and obtain a ``tile'' version of these diagrams (see
\cite{myers2016string,hansen19}).

\section{Conclusions}
\label{sec:orgfa5d931}
We have presented a way to study and compose \emph{processes} in monoidal categories
that do not necessarily have the usual shape of a square box without losing the
benefits of the usual language of monoidal categories. Direct applications seem
to be circuit design, see \cite{chiribella08}, or the theory of optics
\cite{profunctor20}. This technique is justified by the formalism of coend
calculus \cite{loregian19} and string diagrams for monoidal bicategories
\cite{bartlett2014quasistrict}. We also argue that the graphical representation of
coend calculus is helpful to its understanding: contrasting with usual
presentations of coends that are usually centered around the Yoneda reductions,
the graphical approach seems to put more weight in the non-reversible
transformations while making most applications of Yoneda lemma transparent.
Regarding open diagrams, we can think of many other applications that have not
been described in this article: we could speak of multiple categories at the same
time and combine open diagrams of any of them using functors and adjunctions.
This work has opened many paths that we aim to further explore.

We have been working in the symmetric monoidal bicategory of profunctors for
simplicity, but similar results extend to the symmetric monoidal bicategory of
\(\mathcal{V}\mbox{-profunctors}\) for \(\mathcal{V}\) a Bénabou cosmos
\cite[\S 5]{loregian19}. We can also consider arbitrary monoidal bicategories
and drop the requirements for symmetry, copying or discarding. Finally, there is
an important shortcoming to this approach that we leave as further work: the
present graphical calculus is an extremely good tool for \emph{coend calculus}, but it
remains to see if it is so for \emph{(co)end calculus}. In other words, \emph{ends} ``enter the
picture'' only as natural transformations (see \cite{willerton:twotraces}), and
this can feel limiting even if, after applying Yoneda embeddings, it usually
suffices for most applications. As it happens with diagrammatic presentations of
regular logic \cite{bonchi:relational,fong:regularlogic}, the existential
quantifier plays a more prominent role. Diagrammatic approaches to obtaining the
universal quantifier in a situation like this go back to Peirce and are
described by \cite{nathan:firstorderpierce}.

\section*{Acknowledgements}
\label{sec:orga7c6ba1}
The author thanks seminars, discussion with, questions and/or comments by
\textsf{Paweł Sobociński}, \textsf{Edward Morehouse}, \textsf{Fosco Loregian},
\textsf{Elena Di Lavore}, \textsf{Jens Seeber}, \textsf{Jules Hedges}, \textsf{Nathaniel Virgo}, and
the whole Compositional Systems and Methods group at Tallinn University of
Technology. Mario Román was supported by the European Union through the ESF
funded Estonian IT Academy research measure (project 2014-2020.4.05.19-0001).

\bibliographystyle{eptcs}
\bibliography{bibliography}

\newpage

\section{Appendix}
\label{sec:orge2d8f8e}
\subsection{The Monoidal Bicategory of Profunctors}
\label{sec:org978abc1}
\label{sec:profunctors}

\begin{definition}
There exists a symmetric monoidal bicategory \(\Prof\) having as 0-cells the
(small) categories \(\A,\B,\C,\dots\); as 1-cells from \(\A\) to \(\B\), the
profunctors \(\A^{op} \times \B \to \Set\); as 2-cells, the natural
transformations; and as tensor product, the cartesian product of categories
\cite{loregian19}. Two profunctors \(P \colon \A^{op} \times \B \to \Set\) and
\(Q \colon \B^{op} \times \C \to \Set\) compose into the profunctor
\((P \diamond Q) \colon \A^{op} \times \C \to \Set\) defined by
\[(P \diamond Q)(A,C) \coloneqq \int^{B \in \B} P(A,B) \times Q(B,C).\]
The unit of composition in the category \(\A\) is the hom-profunctor \(\hom_{\A} \colon \A^{op} \times \A \to \Set\).
Unitors, \((\tinyWireFunctor\mkern-2mu\tinyFunctor{P}) \cong (\tinyFunctor{P}) \cong (\tinyFunctor{P}\mkern-2mu\tinyWireFunctor)\), are given by the Yoneda isomorphisms.
\begin{align*}
\lambda_{P,A,B} \colon \int^{A' \in \A} \hom_{\A}(A,A') \times P(A',B) \cong P(A,B) \\
\rho_{P,A,B} \colon \int^{B' \in \B} P(A,B') \times \hom_{\B}(B',B) \cong P(A,B).
\end{align*}
The associator \(\alpha \colon (\tinyFunctor{P_{1}}\mkern-2mu\tinyFunctor{P_{2}}) \diamond (\tinyFunctor{P_{3}}) \cong (\tinyFunctor{P_{1}}) \diamond (\tinyFunctor{P_2}\mkern-2mu\tinyFunctor{P_3})\) can be constructed from continuity and
associativity of the cartesian product of sets. Unitors and associator satisfy the pentagon
and triangular equations.

Let us describe the monoidal structure (following \cite{schommerpries2011classification}).
It uses the cartesian product of small categories and
the terminal category as unit. The monoidal product of two profunctors
\(P_1 \colon \A_1^{op} \times \B_1 \to \Set\) and
\(P_2 \colon \A_2^{op} \times \B_2 \to \Set\) is the profunctor
\((P_1 \otimes P_2) \colon (\A_1 \times \A_2)^{op} \times (\B_1 \times \B_2) \to \Set\)
defined by
\[(P_1 \otimes P_2)(A_1,A_2,B_1,B_2) \coloneqq P_1(A_1,B_1) \times P_2(A_2,B_2).\]
Unitality and associativity follow those on the cartesian structure of sets.
There exist natural isomorphisms
\(\phi^\otimes_{P_1,P_2,Q_1,Q_2} \colon (P_1 \diamond Q_1) \otimes (P_2 \diamond Q_2) \cong (P_1 \otimes P_2) \diamond (Q_1 \otimes Q_2)\)
and \(\phi^\otimes_{\A_1,\A_2} \colon (\hom_{\A_1} \otimes \hom_{\A_2}) \cong (\hom_{\A_1 \times \A_2})\)
given by continutity and the Fubini rule of coends that make it a pseudofunctor.
It has equivalences \(a \colon \A \times (\B \times \C) \cong (\A \times \B) \times \C\),
\(\lunit \colon \mathbf{1} \times \A \cong \A\) and \(\runit \colon \A \times \mathbf{1} \cong \A\), but also \(\beta \colon \A \times \B \cong \B \times \A\), with modifications making it a braided, sylleptic
and finally symmetric monoidal bicategory.
This symmetric monoidal bicategory can also be constructed from the symmetric
double category of profunctors, see \cite{hansen19}.

Every category has a dual, its opposite category. There are profunctors \((\A^{op} \times \A) \times \mathbf{1} \to \Set\)
and \(\mathbf{1}^{op} \times (\A^{op} \times \A)^{op} \to \Set\) given by further variations of the hom-profunctor; these are represented
by caps and cups. Profunctors circulate through the caps and cups as expected thanks to the Yoneda
lemma. See \cite{stay2013compact} for the description as a compact closed bicategory.
\end{definition}
\begin{definition}[Yoneda Embedding of Functors]
\label{definition_yonedaembedding_functors}
Let \(F \colon \C \to \D\) be a functor. It can be embedded as a profunctor
\((\tinyFunctor{F}) \colon \C^{op} \times \D \to \Set\) or as a profunctor
\((\tinycoFunctor{F}) \colon \D^{op} \times \C \to \Set\). Moreover, every
functor has an opposite, so it can also be embedded as a profunctor
\((\tinyOpfunctor)\colon (\D^{op})^{op} \times \C^{op} \to \Set\) or as a
profunctor
\((\tinyOpCoFunctor) \colon (\C^{op})^{op} \times \D^{op} \to \Set\). In
particular, \(F \dashv G\) precisely when
\((\tinyFunctor{F}) \cong (\tinycoFunctor{G})\).
\end{definition}

The suggestive shape of the boxes (from \cite{coecke:picturing}) is matched by
their semantics. Every category has a dual (namely, its opposite category) and
functors circulate as expected through the cups and the caps that represent
dualities.
\[\capslidingOne \cong \capslidingTwo \qquad;\qquad \cupslidingOne  \cong  \cupslidingTwo\]

\begin{proposition}
\label{yoneda:pseudofunctors}
Both Yoneda embeddings are strong monoidal pseudofunctors
\(\Cat{Cat} \to \Prof\), fully faithful on the 2-cells. Pseudofunctoriality gives
\((\tinyFunctor{F} \mkern-2mu \tinyFunctor{G}) \cong (\tinyWideFunctor)\) and
its counterpart. Monoidality gives the following isomorphism and its mirrored
counterpart.
\[\productfunctors\]
\end{proposition}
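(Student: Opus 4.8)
The plan is to treat both embeddings uniformly and let coend calculus supply all the coherence data. On $0$-cells each embedding fixes the category; on a $1$-cell $F$ it produces one of the two representable profunctors $\tinyFunctor{F} = \B(F-,-)$ or $\tinycoFunctor{F} = \B(-,F-)$; and on $2$-cells it acts by pre- or post-composition. The compositor is then a one-line reduction: given $F \colon \A \to \B$ and $G \colon \B \to \C$ I would compute $(\tinyFunctor{F} \diamond \tinyFunctor{G})(A,C) = \int^{B} \B(FA,B) \times \C(GB,C)$ and apply the co-Yoneda (density) lemma in the variable $B$, in its contravariant form with $K(B) = \C(GB,C)$, to substitute $B = FA$ and obtain $\C(GFA,C)$, i.e.\ the profunctor $(\tinyWideFunctor)$. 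This is exactly the stated isomorphism $(\tinyFunctor{F} \diamond \tinyFunctor{G}) \cong (\tinyWideFunctor)$. The unitor is even more direct: $\tinyFunctor{\id_{\A}}(A,A') = \A(\id_{\A}A,A') = \hom_{\A}(A,A')$ on the nose, so the unit coherence is an identity. The pseudofunctor axioms then reduce to the naturality and coherence of these density isomorphisms, which are the very isomorphisms used to build the unitors and associator of $\Prof$ constructed above.

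For local full faithfulness I would fix $F,G \colon \A \to \B$ and analyse a $2$-cell $\theta \colon \tinyFunctor{F} \Rightarrow \tinyFunctor{G}$, that is a family $\theta_{A,B} \colon \B(FA,B) \to \B(GA,B)$ natural in $(A,B) \in \A^{op} \times \B$. Holding $A$ fixed, $\theta_{A,-}$ is a map of covariant representables on $\B$, and the Yoneda lemma identifies such maps with morphisms $GA \to FA$; naturality in the remaining variable $A$ upgrades this family to a natural transformation $G \Rightarrow F$. This yields a bijection $\mathrm{Nat}(\tinyFunctor{F},\tinyFunctor{G}) \cong \mathrm{Nat}(G,F)$, so $\tinyFunctor{(-)}$ is fully faithful on $2$-cells while reversing them (hence a pseudofunctor out of $\Cat{Cat}^{\mathrm{co}}$). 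The mirror computation, applying Yoneda in the contravariant variable, gives $\mathrm{Nat}(\tinycoFunctor{F},\tinycoFunctor{G}) \cong \mathrm{Nat}(F,G)$, so $\tinycoFunctor{(-)}$ is covariantly fully faithful on $2$-cells.

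For strong monoidality the key observation is that the hom-profunctor of a product category splits: for $F_1 \colon \A_1 \to \B_1$ and $F_2 \colon \A_2 \to \B_2$ the profunctor $\tinyFunctor{F_1} \otimes \tinyFunctor{F_2}$ is $\B_1(F_1-,-) \times \B_2(F_2-,-)$, which equals $(\B_1 \times \B_2)\bigl((F_1 \times F_2)-,-\bigr)$, the embedding of $F_1 \times F_2$; and the unit category $\mathbf{1}$ is carried to the monoidal unit of $\Prof$. These equalities are precisely the monoidality data displayed as $\productfunctors$, and the mirrored statement for $\tinycoFunctor{(-)}$ is identical. To finish I would check that this splitting is compatible with the compositor of the first paragraph, which amounts to the interchange between the density reduction and the Fubini rule witnessed by the isomorphism $\phi^{\otimes}$ constructed above, so that the local and monoidal coherence isomorphisms assemble into a strong monoidal pseudofunctor.

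The main obstacle is not the existence of any single isomorphism — each is a one-step (co-)Yoneda reduction or an outright identity — but the verification that they satisfy the full list of coherence axioms for a monoidal pseudofunctor between monoidal bicategories. I expect the conceptual content to be entirely absorbed by the coherence of the (co-)Yoneda isomorphisms, since these are the same isomorphisms that define the unitors, associator and $\phi^{\otimes}$ of $\Prof$; consequently every coherence diagram should commute by naturality. The real cost is organising the diagram chases, and I would keep this manageable by reusing the established coherence of $\Prof$ rather than expanding any coends by hand.
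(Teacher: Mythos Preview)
The paper states this proposition in the appendix without proof, so there is no argument to compare against; your proposal supplies exactly the standard verification one would expect and is correct in every step: the compositor is a single co-Yoneda reduction, the unitor is an identity, local full faithfulness is Yoneda in one variable plus naturality in the other, and strong monoidality is the equality $\hom_{\B_1 \times \B_2} = \hom_{\B_1} \times \hom_{\B_2}$.

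One point worth highlighting is that you are actually more careful than the paper. You observe that $\tinyFunctor{(-)}$ reverses $2$-cells (a map $\B(FA,-)\to\B(GA,-)$ of covariant representables corresponds to a morphism $GA\to FA$), so this embedding is a pseudofunctor $\Cat{Cat}^{\mathrm{co}}\to\Prof$, while $\tinycoFunctor{(-)}$ is covariant on $2$-cells but reverses $1$-cells, hence is a pseudofunctor $\Cat{Cat}^{\mathrm{op}}\to\Prof$. The paper's phrasing ``pseudofunctors $\Cat{Cat}\to\Prof$'' elides this variance; your account is the precise one, and nothing in the rest of the paper depends on the imprecision. The only remaining burden, as you say, is the coherence bookkeeping, and your strategy of reusing the unitors, associator and $\phi^{\otimes}$ already constructed for $\Prof$ is the right way to discharge it.
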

\begin{proposition}[Functors are Left Adjoints]
\label{prop:functorsleftadjoints}
In the category of profunctors, functors are left adjoints, in the sense that
there exist morphisms
\(\eta_F \colon (\tinyWire) \to (\tinyFunctor{F} \mkern-2mu \tinycoFunctor{F})\)
and
\(\varepsilon_F \colon (\tinycoFunctor{F} \mkern-2mu \tinyFunctor{F}) \to (\tinyWire)\)
and they verify the zig-zag identities. Moreover, every natural transformation
commutes with these dualities in the sense that the following are two
commutative squares.\footnote{The graphical calculus of the bicategory makes these
equations much clearer. We are emphasizing the monoidal bicategory structure
here only for the sake of coherence.}
\[\begin{tikzcd}
(\tinyWire) \rar{\eta_F} \dar[swap]{\eta_G} & (\tinyFunctor{F} \mkern-2mu \tinycoFunctor{F}) \dar{\alpha}
& (\tinycoFunctor{F} \mkern-2mu \tinyFunctor{G}) \dar{\alpha} \rar{\alpha} & (\tinycoFunctor{F} \mkern-2mu \tinyFunctor{F}) \dar{\varepsilon_{F}} \\
(\tinyFunctor{G} \mkern-2mu \tinycoFunctor{G}) \rar{\alpha} & (\tinyFunctor{F} \mkern-2mu \tinycoFunctor{G})
& (\tinycoFunctor{G} \mkern-2mu \tinyFunctor{G}) \rar{\varepsilon_{G}} & (\tinyWire)
\end{tikzcd}\]
A partial converse holds: a left adjoint profunctor is
representable when its codomain is Cauchy complete; see \cite{borceux94}.
\end{proposition}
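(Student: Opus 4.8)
The plan is to exhibit the unit and counit explicitly and then reduce each axiom to an elementary property of $F$ and of composition in $\D$. First I would identify the two relevant composites. By the co-Yoneda (density) lemma---the same isomorphism that supplies the unitors $\lambda, \rho$ of $\Prof$---we have $(\tinyFunctor{F} \mkern-2mu \tinycoFunctor{F})(C,C') = \int^{D} \D(FC,D) \times \D(D,FC') \cong \D(FC,FC')$, while $(\tinycoFunctor{F} \mkern-2mu \tinyFunctor{F})(D,D') = \int^{C} \D(D,FC) \times \D(FC,D')$. With these descriptions I define $\eta_F \colon \hom_{\C} \to (\tinyFunctor{F} \mkern-2mu \tinycoFunctor{F})$ to be the action of $F$ on morphisms, $f \mapsto Ff$, which is a natural transformation precisely because $F$ is a functor; and $\varepsilon_F \colon (\tinycoFunctor{F} \mkern-2mu \tinyFunctor{F}) \to \hom_{\D}$ to be composition, $[C,(g,h)] \mapsto h \circ g$, for $g \colon D \to FC$ and $h \colon FC \to D'$. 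The counit is well defined on the coend because the generating relation slides a morphism of the form $Fu \colon FC \to FC''$ across the pairing, and associativity of composition in $\D$ leaves $h \circ g$ unchanged; so the coend quotient is exactly what makes $\varepsilon_F$ coherent.

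Next I would verify the two zig-zag identities. Chasing a representative through $(\tinyFunctor{F}) \overset{\eta_F}\to (\tinyFunctor{F} \mkern-2mu \tinycoFunctor{F} \mkern-2mu \tinyFunctor{F}) \overset{\varepsilon_F}\to (\tinyFunctor{F})$, the unit inserts $F\id_C = \id_{FC}$ and the counit composes it away, so the round trip is the identity by left unitality of composition in $\D$; the other triangle is mirror-symmetric and uses right unitality. Concretely each identity is a short co-Yoneda reduction once the associator $\alpha$ and the unitors $\lambda, \rho$ of the appendix are spelled out, and the whole content is $F\id = \id$ together with the unit laws in $\D$.

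For the second claim I would first record, using that both Yoneda embeddings are fully faithful on $2$-cells (Proposition \ref{yoneda:pseudofunctors}), that a natural transformation $\alpha \colon F \to G$ induces the expected $2$-cells by pre- and post-composition with the components $\alpha_C$: precomposition gives $(\tinyFunctor{G}) \to (\tinyFunctor{F})$ and postcomposition gives $(\tinycoFunctor{F}) \to (\tinycoFunctor{G})$. Whiskering these into the composites produces the vertical and horizontal arrows labelled $\alpha$ in the two squares. Chasing $f \in \hom_{\C}(C,C')$ through the first square yields $\alpha_{C'} \circ Ff$ along one path and $Gf \circ \alpha_{C}$ along the other, so that square commutes exactly by naturality of $\alpha$; chasing a class $[C,(g,h)]$ through the second square yields $(h \circ \alpha_{C}) \circ g$ versus $h \circ (\alpha_{C} \circ g)$, so it commutes by associativity in $\D$.

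Finally, the partial converse is not proved here: a left adjoint profunctor $\C \to \D$ is forced to be representable by a functor into the Cauchy completion of $\D$, hence by an honest functor $\C \to \D$ when $\D$ is already Cauchy complete, and for this I would simply cite \cite{borceux94}. The main obstacle is the bookkeeping in the triangle identities---keeping the chosen coend representative straight as it passes through the associator and the two unitors---but conceptually everything collapses to $F\id = \id$, associativity, and naturality, so no genuine difficulty arises.
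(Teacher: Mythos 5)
Your proof is correct. Note that the paper itself offers no proof of this proposition: it is stated as a standard fact, with only the Cauchy-completeness converse delegated to \cite{borceux94}, so there is no in-paper argument to compare yours against. Your construction---unit given by the action of \(F\) on morphisms, counit given by composition in \(\D\) (well defined on the coend precisely by associativity), zig-zag identities reducing to \(F\id = \id\) together with the unit laws, and the two squares reducing to naturality of \(\alpha\) and associativity---is the standard element-wise argument, and it is consistent with the paper's conventions for \(\diamond\), the Yoneda unitors \(\lambda, \rho\), and the two embeddings \((\tinyFunctor{F})\) and \((\tinycoFunctor{F})\), so it correctly fills the gap the paper leaves implicit.
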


\begin{proposition}
\label{prop:copydiscard}
Every object \(\A\) of the category of profunctors has
already a canonical pseudocomonoid structure lifted from \(\Cat{Cat}\) and given by
\((\blackComonoid) \coloneqq \A(-^0,-^{1}) \times \A(-^0,-^{2})\) and
\((\blackComonoidUnit) \coloneqq 1\); but also a pseudomonoid structure given by
\((\blackMonoid) \coloneqq \A(-^1,-^0) \times \A(-^2,-^0)\), and
\((\blackMonoidUnit{}) \coloneqq 1\). These structures \emph{copy} and \emph{discard}
representable and corepresentable functors, respectively; but they also laxly
copy and discard arbitrary profunctors.
\end{proposition}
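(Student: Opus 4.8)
The plan is to recognise both structures as the images, under the two Yoneda embeddings of Proposition~\ref{yoneda:pseudofunctors}, of the unique comonoid that every object carries in the cartesian $2$-category $\Cat{Cat}$. In $(\Cat{Cat},\times,\mathbf{1})$ the monoidal product is the genuine cartesian product, so each category $\A$ is canonically a (strict, cocommutative) comonoid with comultiplication the diagonal $\Delta_{\A}\colon \A \to \A\times\A$ and counit the terminal functor $!_{\A}\colon\A\to\mathbf{1}$. First I would record that the covariant embedding returns the claimed data: $(\tinyFunctor{\Delta_{\A}})(a;b,c) = (\A\times\A)(\Delta a,(b,c)) = \A(a,b)\times\A(a,c)$, which is exactly $(\blackComonoid)$, and $(\tinyFunctor{!_{\A}}) = 1 = (\blackComonoidUnit)$. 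Dually, the contravariant embedding sends the \emph{same} comonoid to $(\tinycoFunctor{\Delta_{\A}})(b,c;a) = (\A\times\A)((b,c),\Delta a) = \A(b,a)\times\A(c,a) = (\blackMonoid)$ and $(\tinycoFunctor{!_{\A}}) = 1 = (\blackMonoidUnit{})$.

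Next I would appeal to Proposition~\ref{yoneda:pseudofunctors}: a strong monoidal pseudofunctor preserves pseudomonoids and pseudocomonoids. The covariant embedding is strong monoidal and covariant on $1$-cells, so it carries the comonoid $(\A,\Delta_{\A},!_{\A})$ to a genuine pseudocomonoid $(\A,(\blackComonoid),(\blackComonoidUnit))$ in $\Prof$, with associativity, unitality and their coherence transported from $\Cat{Cat}$ and reassembled from the pseudofunctoriality and monoidality constraints. The contravariant embedding is likewise strong monoidal but \emph{reverses} the direction of $1$-cells; regarded as a strong monoidal pseudofunctor out of $\Cat{Cat}^{\mathrm{op}}$, it sends the same comonoid to a pseudomonoid, yielding $(\A,(\blackMonoid),(\blackMonoidUnit{}))$. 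Equivalently one may produce the pseudomonoid as the mate of the pseudocomonoid: by Proposition~\ref{prop:functorsleftadjoints} one has $(\tinyFunctor{F})\dashv(\tinycoFunctor{F})$, and the right adjoints of comonoid structure maps form a monoid, so the two structures are dual $2$-cells of one adjoint situation.

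It then remains to verify the copy/discard claims, which are clean instances of the density (co-Yoneda) formula $\int^{a}\A(A,a)\times P(a)\cong P(A)$ for a presheaf $P$. For an input port $(\objectYoneda{A})$ I would compute
\[
\big((\objectYoneda{A})\diamond(\blackComonoid)\big)(b_1,b_2)\;=\;\int^{a}\A(A,a)\times\A(a,b_1)\times\A(a,b_2)\;\cong\;\A(A,b_1)\times\A(A,b_2),
\]
reading $\A(a,b_1)\times\A(a,b_2)$ as the presheaf $P$; this is $(\objectYoneda{A})\otimes(\objectYoneda{A})$, the copy, while $(\objectYoneda{A})\diamond(\blackComonoidUnit)\cong\int^{a}\A(A,a)\times 1\cong 1$ is the discard (density with the constant presheaf). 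The corepresentable case for $(\blackMonoid)$ and $(\blackMonoidUnit{})$ is the mirror image under $(-)^{\mathrm{op}}$. Finally, for an arbitrary profunctor $P\colon\A^{op}\times\B\to\Set$ the same reductions run only one way: since $(\blackComonoid)\diamond(P\otimes P)\cong P(a,b_1)\times P(a,b_2)$ by density, the assignment sending a representative $(p,g_1,g_2)$ with $g_i\colon b\to b_i$ to $(P(a,g_1)(p),P(a,g_2)(p))$ defines a natural, generally non-invertible $2$-cell $P\diamond(\blackComonoid)\to(\blackComonoid)\diamond(P\otimes P)$, and the unique map to the terminal set gives $P\diamond(\blackComonoidUnit)\to(\blackComonoidUnit)$. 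These are the lax copy and discard, invertible exactly when $P$ is representable.

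The main obstacle is conceptual bookkeeping rather than calculation: being precise that the contravariant embedding is the appropriate $1$-cell-reversing strong monoidal pseudofunctor, so that it genuinely converts comonoids into \emph{pseudo}monoids, and that this description agrees with the mate account under Proposition~\ref{prop:functorsleftadjoints}; one must also check that the coherence $2$-cells transported from $\Cat{Cat}$ still satisfy the pseudomonoid axioms in $\Prof$. Once Proposition~\ref{yoneda:pseudofunctors} is invoked these are automatic, and the copy/discard computations are routine density reductions, so I expect no essential difficulty beyond keeping the variances straight.
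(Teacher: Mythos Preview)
Your proposal is correct and follows essentially the same approach as the paper: lift the diagonal and terminal functors from $\Cat{Cat}$ through the two Yoneda embeddings (Proposition~\ref{yoneda:pseudofunctors}) to obtain the pseudocomonoid and pseudomonoid, then exhibit the lax copy map $\int^{b}P(a,b)\times\B(b,b_1)\times\B(b,b_2)\to P(a,b_1)\times P(a,b_2)$ and observe it is an isomorphism in the representable case. Your write-up is considerably more explicit than the paper's sketch (and your additional remark on mates via Proposition~\ref{prop:functorsleftadjoints} is a welcome gloss), but there is no genuine divergence in method.
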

\begin{proof}
This is a consequence of the fact the diagonal and discard functors \((\Delta) \colon \A \to \A \times \A\) and \((!) \colon \A \to \mathbf{1}\) copy and discard functors in \(\Cat{Cat}\).  Pseudofunctoriality of both Yoneda embeddings sends them to the profunctors we are describing in \(\Prof\).

On the other hand, arbitrary profunctors are laxly copied and discarded. For
instance, the following morphism shows that a profunctor \(P \colon
\A^{op} \times \B \to \Set\) is laxly copied. In the case of representable
profunctors, this is an isomorphism.
\[ \int^X P(A,X) \times \hom_{\A}(X,Y_1) \times \hom_{\B}(X,Y_2) \to
P(A,Y_1) \times P(A,Y_2). \qedhere
\]
\end{proof}

\subsection{The Monoidal Bicategory of Pointed Profunctors}
\label{sec:orga39b859}
\label{sec:pointedprofunctors}

\begin{definition}
A \emph{pointed category} \((\A,X)\) is a category \(\A\) equipped with a chosen
object \(X\), which can be regarded as a functor from the terminal category.
There exists a symmetric monoidal bicategory \(\ProfAst\) having as 0-cells
pairs \((\A,X)\) where \(\A\) is a (small) category and \(X \in \A\) is an
object of that category; 1-cells from \((\A,X) \to (\B,Y)\) pairs \((P,p)\)
given by a profunctor \(P \colon \A^{op} \times \B \to \Set\) and a point \(p
\in P(X,Y)\);
2-cells from \((P,p) \to (Q,q)\) are natural transformations
\(\eta \colon P \to Q\) such that \(\eta_{X,Y}(p) = q\). Composition of 1-cells
\((P,p) \colon (\A,X) \to (\B,Y)\) and \((Q,q)\colon (\B,Y) \to (\C,Z)\) is
given by \((Q \diamond P, \left\langle q,p \right\rangle)\), where
\(\left\langle q,p \right\rangle \in (Q\diamond P)(X,Z)\) is the equivalence
class under the coend of the pair \((q,p)\).
The identity 1-cell in \((\A,X)\) is \((\hom_{\A},\id_X) \colon (\mathbf{1}, 1) \to
(\A,X)\).

\emph{Unitors} \(\lambda_{(P,p)} \colon (\hom_\A \diamond P, \left\langle
 \id_X, p \right\rangle) \to (P,p)\) and \(\runit_{(P,p)} \colon (P
 \diamond \hom_{\A}, \left\langle p, \id_X \right\rangle) \cong (P,p)\) are
given again by the Yoneda isomorphisms.
\begin{align*}
  \lambda_P \colon \int^{Z \in \A}\hom_{\A}(X,Z) \times P(Z,Y) \cong P(X,Y)\\
  \rho_P \colon \int^{Z \in \A} P(X,Z) \times \hom_{\A}(Z,Y) \cong P(X,Y)
\end{align*}
The Yoneda isomorphisms are such that \(\lambda_P \left\langle \id_X,p
\right\rangle = \id_X \circ p = p\) and \(\rho_P \left\langle p,\id_Y
\right\rangle = p \circ \id_Y = p\).
This confirms they are valid 2-cells of \(\ProfAst\).

The \emph{associator} \(\alpha_{(P,p,Q,q,R,r)} \colon ((P \diamond Q) \diamond R,
\left\langle \left\langle p,q \right\rangle, r \right\rangle) \to (P \diamond (Q
\diamond R), \left\langle p, \left\langle q,r \right\rangle \right\rangle)\) is
given by the isomorphism described by continuity and associativity of the
cartesian product.
\begin{align*}
  & \int^{V} \left(  \int^U P(X,U) \times Q(U,V) \right) \times R(V,Y) \cong
  & \int^U P(X,U) \times \left(\int^{V}  Q(U,V)  \times R(V,Y) \right).
\end{align*}
It is defined by \(\alpha (\left\langle \left\langle p,q \right\rangle , r
\right\rangle) = \left\langle p, \left\langle q,r \right\rangle \right\rangle\),
proving that it is a valid 2-cell of \(\ProfAst\). The same triangle and pentagon equations
hold as they did in \(\Prof\).

The symmetric monoidal structure also follows from that in \(\Prof\). It uses the
cartesian product of pointed categories (choosing the pair of points) and the
terminal category with its only object, \((\One,1)\). The monoidal product of
pointed profunctors is defined by \((P_1,p_1) \otimes (P_2,p_2) \coloneqq (P_1
\otimes P_2, (p_1,p_2))\). Unitality and associativity follow again from those on
the cartesian structure of sets. The same natural isomorphisms
\(\phi^\otimes_{P_1,P_2,Q_1,Q_2} \colon (P_1 \diamond Q_1) \otimes (P_2 \diamond
Q_2) \cong (P_1 \otimes P_2) \diamond (Q_1 \otimes Q_2)\) and \(\phi^\otimes_{\A_1,\A_2} \colon (\hom_{\A_1} \otimes \hom_{\A_2}) \cong (\hom_{\A_1 \times \A_2})\)
can be also shown to preserve the points. Finally, the equivalences witnessing
associativity, left and right unitality and the braiding make the category symmetric.
\end{definition}

\subsection{Pseudofunctor box}
\label{sec:org04b6615}
\label{pseudofunctorbox}
\begin{proposition}
Let \(\A\) be a small category. There exists a pseudofunctor \(\A \to
\ProfAst\) sending every object \(A \in \A\) to the 0-cell pair \((\A,A)\) and
every morphism \(f \in \hom_\A(A,B)\) to the 1-cell pair \((\hom_{\A}, f)\). Moreover,
when \((\A,\otimes,I)\) is monoidal, the pseudofunctor is lax and oplax monoidal
(weak pseudofunctor in \cite{moeller18}, with oplaxators being left adjoint to
laxators). This would be an op-ajax monoidal pseudofunctor, following the notion
of \emph{ajax monoidal functor} from \cite{fong:regularlogic}.
\end{proposition}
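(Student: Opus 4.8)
The plan is to specify the pseudofunctor levelwise and then transport every coherence condition through the point-forgetting pseudofunctor $\U\colon\ProfAst\to\Prof$ of Proposition~\ref{prop:liftreductions}. Because $\U$ is a discrete opfibration on each hom-category, every $2$-cell of $\ProfAst$ is the unique point-preserving lift of its image in $\Prof$; this reduces each equation to one that already holds in $\Prof$ together with a residual equation between chosen points, and the latter will always collapse to an axiom of $\A$.

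First I would fix the data and check point preservation. On $0$-cells, $A\mapsto(\A,A)$; on $1$-cells, $f\colon A\to B$ maps to $(\hom_\A,f)$. The unitor is an identity, since the distinguished identity $1$-cell at $(\A,A)$ is exactly $(\hom_\A,\id_A)=F(\id_A)$, so $F$ is strictly unital. The compositor $F(g)\diamond F(f)\cong F(g\circ f)$ is the Yoneda isomorphism $\hom_\A\diamond\hom_\A\cong\hom_\A$ that already serves as a unitor in $\Prof$; the only new content is that it is a $2$-cell of $\ProfAst$, i.e. that it carries the coend class $\langle g,f\rangle$ to $g\circ f$, which is immediate from the definition of composition in $\ProfAst$.

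Next I would discharge pseudofunctoriality. The associativity and unit coherences for the compositor hold in $\Prof$ because the compositor is assembled from the unitors of $\Prof$, whose coherence is recorded in the appendix; by the lifting principle it then suffices to check the induced equation on points, and for the associativity constraint this is exactly $h\circ(g\circ f)=(h\circ g)\circ f$ in $\A$, read off from the two bracketings $\langle h,\langle g,f\rangle\rangle$ and $\langle\langle h,g\rangle,f\rangle$.

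For the monoidal statement, when $(\A,\otimes,I)$ is monoidal I would take the laxators to be the canonical pseudomonoid maps $(\whiteMonoid)$ and $(\whiteMonoidUnit)$ pointed at the identities $\id_{A\otimes B}$ and $\id_I$, and the oplaxators to be the pseudocomonoid maps $(\whiteComonoid)$ and $(\whiteComonoidUnit)$ pointed at the same identities. Naturality in $A,B$ and the (op)lax associativity and unit coherences again descend from the already-established equations in $\Prof$ — essentially the strong monoidality of the Yoneda embedding, Proposition~\ref{yoneda:pseudofunctors} — with the point-equations reducing to the pentagon and triangle of $\otimes$ evaluated at identities. The adjunction relating laxators and oplaxators is the image of Proposition~\ref{prop:functorsleftadjoints} (functors are left adjoints) applied to $\otimes\colon\A\times\A\to\A$ and to $I\colon\One\to\A$: this yields $\eta_\otimes,\varepsilon_\otimes$ together with their nullary analogues in $\Prof$, and these lift uniquely to $\ProfAst$ since they are assembled from identities and composition in $\A$ and so preserve the chosen points. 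The main obstacle is not any single computation but the number of coherences bundled into ``lax and oplax monoidal with adjoint (op)laxators'', the \emph{op-ajax} conditions of \cite{fong:regularlogic}; the step that genuinely needs care is the uniform application of the lifting principle of Proposition~\ref{prop:liftreductions}, after which no coherence diagram has to be verified from scratch and every residual obligation is a monoidal axiom of $\A$ evaluated at identities.
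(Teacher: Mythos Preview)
Your proposal is correct and follows the same construction as the paper: identical data for the pseudofunctor, the laxators $(\hom(-,-\otimes-),\id_{A\otimes B})$, $(\hom(I,-),\id_I)$ and oplaxators $(\hom(-\otimes-,-),\id_{A\otimes B})$, $(\hom(-,I),\id_I)$, with the adjunction supplied by composition and identities exactly as the paper records. The paper only sketches the verification, whereas you make explicit the mechanism by which every coherence is discharged, namely the discrete opfibration of Proposition~\ref{prop:liftreductions}; this is a welcome clarification rather than a different route.
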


We only sketch the construction. The invertible 2-cells witnessing
pseudofunctoriality use the fact that the Yoneda isomorphisms (unitors and
associators) send pairs of points to their composition, coinciding with the
composition on the base category \(\A\).

The following natural transformations make the
functor lax monoidal.
  \[\left( \tinyLaxator \right) \coloneqq (\hom(-,- \otimes -), \id_{A \otimes B}) \colon (\A \times \A,(A,B)) \to (\A,A \otimes B)\]
  \[ \left( \laxatorOne \right) \coloneqq (\hom(I,-), \id_{I}) \colon (\mathbf{1},\ast) \to (\A,I) \]
The following natural transformations make the functor oplax monoidal.
  \[ \left( \laxatorTwo \right) \coloneqq (\hom(- \otimes - ,-), \id_{A \otimes B}) \colon (\A,A \otimes B) \to (\A \times \A,(A,B)) \]
  \[ \left( \oplaxatorOne \right) \coloneqq (\hom(-, I), \id_{I}) \colon (\A,I) \to (\mathbf{1}, \ast) \]
Composition and identities give the counits and units of the adjunctions. The
fact that identity is the unit for composition makes the following
transformations be 2-cells of \(\ProfAst\).
  \[\adjunctionProductOne \overset{\varepsilon_{\mu}}\to \adjunctionProductTwo \qquad \emptyDiagram \quad \overset{\eta_{u}}\to \quad \adjunctionUnitTwo\]
  \[\coadjunctionProductOne \overset{\eta_{\mu}}{\to} \coadjunctionProductTwo \qquad \coadjunctionThree \overset{\varepsilon_u}\to \coadjunctionFour
  \qedhere\]

The following morphisms follow the cups, caps, splitting and merging structure
from \(\Prof\) in \(\ProfAst\). Morphisms circulate through them as expected:
turning to morphisms in the opposite category, being copied and discarded.
\[\left( \feedbackpiece \right) \coloneqq (\hom(-,-),\id_A) \colon (\A \times \A^{op},(A,A)) \to (\mathbf{1},1),\]
\[\left( \feedbackpieceright \right)\coloneqq (\hom(-,-),\id_A) \colon (\mathbf{1},1) \to (\A \times \A^{op},(A,A)),\]
\[\left( \internalComonoid  \right)\coloneqq (\hom(-^{0},-^{1}) \times \hom(-^{0},-^{2}),(\id_A,\id_A)) \colon (\A,A) \to (\A \times \A,(A,A)),\]
\[\left( \internalMonoid    \right)
  \coloneqq (\hom(-^{1},-^{0}) \times \hom(-^{2},-^{0}),(\id_A,\id_A)) \colon (\A \times \A,(A,A)) \to (\A,A),\]
\[\left(\internalCreate \right) \coloneqq (1,\ast) \colon (\mathbf{1},1) \to (\A,A);\qquad
\left(\internalDelete \right) \coloneqq (1,\ast) \colon (\A,A) \to (\mathbf{1},1).\]

\begin{proposition}
Let \(\A\) be a category. For every \(A \in \A\), there exist 1-cells
\[(\hom_{\A}(A,-),\id_A) \colon (\mathbf{1},1) \to (\A,A)\quad\mbox{ and }\quad(\hom_{\A}(-,A), \id_A) \colon (\A,A) \to (\mathbf{1},1)\]
given by the Yoneda embeddings of \(A\) and the identity morphism. Composition and identities define an
adjunction.
\[\emptyDiagram \quad \overset{\id_{A}}\to \quad
\adjunctionIdentities \qquad\qquad \splitIdentities \quad \overset{\circ}\to
\quad \identityWireBoxed\]
\end{proposition}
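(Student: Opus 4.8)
The plan is to recognise the two stated $1$-cells as the two Yoneda embeddings of the functor $A \colon \mathbf{1} \to \A$ that names the object $A$, and then to import the adjunction from Proposition~\ref{prop:functorsleftadjoints}. Under the embedding of Definition~\ref{definition_yonedaembedding_functors}, this functor becomes the profunctor $(\tinyFunctor{A}) = \hom_{\A}(A,-) \colon \mathbf{1}^{op} \times \A \to \Set$ together with its co-counterpart $(\tinycoFunctor{A}) = \hom_{\A}(-,A) \colon \A^{op} \times \mathbf{1} \to \Set$; these are exactly the underlying profunctors of the $1$-cells in the statement. By Proposition~\ref{prop:functorsleftadjoints} they form an adjunction $\hom_{\A}(A,-) \dashv \hom_{\A}(-,A)$ in $\Prof$. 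Its unit $\eta_A \colon \hom_{\mathbf{1}} \to \hom_{\A}(A,-) \diamond \hom_{\A}(-,A)$ is the functorial action of $A$, which under the Yoneda isomorphism $\int^{B} \hom_{\A}(A,B) \times \hom_{\A}(B,A) \cong \hom_{\A}(A,A)$ sends the unique point to $\id_A$; its counit $\varepsilon_A \colon \hom_{\A}(-,A) \diamond \hom_{\A}(A,-) \to \hom_{\A}$ is composition in $\A$, namely $\varepsilon_{A}(g,f) = f \circ g$ on $\hom_{\A}(B,A) \times \hom_{\A}(A,B') \to \hom_{\A}(B,B')$. This is an honest, non-invertible adjunction, since the composite $\hom_{\A}(A,-) \diamond \hom_{\A}(-,A)$ has underlying set $\hom_{\A}(A,A)$, generally not a singleton.

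Next I would lift this adjunction to $\ProfAst$ by carrying along the evident points. Both stated $1$-cells carry the chosen point $\id_A \in \hom_{\A}(A,A)$, so by the composition rule of $\ProfAst$ the composite that applies $\hom_{\A}(A,-)$ then $\hom_{\A}(-,A)$ is pointed by the coend class $\langle \id_A, \id_A \rangle$, while the composite in the other order is pointed by $(\id_A,\id_A) \in \hom_{\A}(B,A) \times \hom_{\A}(A,B')$ at $(A,A)$. It then remains to check that $\eta_A$ and $\varepsilon_A$ are genuine $2$-cells of $\ProfAst$, i.e.\ that they preserve these points. For the unit this is precisely the statement that the Yoneda isomorphism identifies $\id_A$ with the class $\langle \id_A, \id_A\rangle$, which is the point of the composite; for the counit it is the computation $\varepsilon_A(\id_A,\id_A) = \id_A \circ \id_A = \id_A$, landing on the chosen point of $\hom_{\A}$. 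These two checks are exactly what the labels $\id_A$ and $\circ$ in the statement record.

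Finally I would verify the two triangle (zig-zag) identities. Rather than grind through the whiskerings and coherence cells by hand, I would invoke the forgetful pseudofunctor $\U \colon \ProfAst \to \Prof$ of Proposition~\ref{prop:liftreductions}. Pseudofunctors preserve adjunctions, and $\U$ sends the data just constructed back to the adjunction $\hom_{\A}(A,-) \dashv \hom_{\A}(-,A)$, whose triangle identities already hold in $\Prof$. The first triangle identity is an equation between parallel $2$-cells $\hom_{\A}(A,-) \Rightarrow \hom_{\A}(A,-)$ living in $\ProfAst((\mathbf{1},1),(\A,A))$, and the second between parallel $2$-cells $\hom_{\A}(-,A) \Rightarrow \hom_{\A}(-,A)$ in $\ProfAst((\A,A),(\mathbf{1},1))$. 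Their images under $\U$ agree; since the hom-functors $\ProfAst((\mathbf{1},1),(\A,A)) \to \Prof(\mathbf{1},\A)$ and $\ProfAst((\A,A),(\mathbf{1},1)) \to \Prof(\A,\mathbf{1})$ are discrete opfibrations by Proposition~\ref{prop:liftreductions}, they are faithful, so the two parallel cells must already coincide upstairs. The main obstacle is nothing conceptual but the bookkeeping of tracking the chosen points through the unitors and associators entering the whiskered composites; the discrete-opfibration property of $\U$ is exactly what sidesteps this, reducing the whole adjunction to the unit laws $\id_A \circ f = f$ and $g \circ \id_A = g$ of $\A$.
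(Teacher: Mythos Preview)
Your proof is correct. The paper does not supply a separate proof for this proposition: it merely displays the unit (labelled $\id_A$) and the counit (labelled $\circ$) and leaves the verification of the triangle identities implicit. Your argument makes explicit what the paper takes for granted, and does so cleanly.

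The one place where you go beyond a routine unpacking is the handling of the triangle identities. The paper, to the extent it addresses them at all, relies on the reader seeing that the zig-zags reduce to $\id_A \circ f = f$ and $g \circ \id_A = g$. You instead invoke the faithfulness of $\U \colon \ProfAst \to \Prof$ on $2$-cells (via the discrete-opfibration statement of Proposition~\ref{prop:liftreductions}) to transport the already-known triangle identities from $\Prof$. This is a legitimate and slightly more robust route: it spares you from tracking the chosen points through the unitors and associators of $\ProfAst$ by hand, at the cost of appealing to one extra structural fact. Either way the content is the same --- the adjunction here is nothing more than the lift to pointed profunctors of the adjunction $(\tinyFunctor{A}) \dashv (\tinycoFunctor{A})$ for the functor $A \colon \mathbf{1} \to \A$ from Proposition~\ref{prop:functorsleftadjoints}, which you identify correctly.
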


\newpage

\subsection{Example: Learners}
\label{sec:orgf2118ad}
\label{sec:learn}
A \emph{learner} \cite[Definition 4.1]{tuyeras19} in a cartesian category is given by a \emph{parameters} object \(P \in \C\), an \emph{implementation} morphism \(i \colon P \times A \to B\), and \emph{update} morphism \(u \colon P \times A \times B \to P\), and a \emph{request} morphism \(r \colon P \times A \times B \to A\).  A monoidal generalization, dinatural on the parameters object, has been proposed in \cite[Definition 6.4.1]{riley18}; the following derivation shows how it particularizes into the cartesian case.
\begin{figure}[H]
\scalebox{0.95}{\parbox{\linewidth}{
\begin{align*}
& \LearnerOne && \int^{P,Q} \C(P \times A, Q \times B) \times \C(Q \times B, P \times A) \\
\cong & \hint{(\whiteComonoid) \cong (\blackComonoid)} & \cong & \hint{\mbox{Universal property of the product}} \\
& \LearnerTwo && \int^{P,Q} \C(P \times A, Q) \times \C(P \times A, B) \times \C(Q \times B, P \times A) \\
\cong & \hint{(\blackComonoid) \mbox{ copies}} & \cong & \hint{\mbox{Yoneda lemma}} \\
& \LearnerThree && \int^{P} \C(P \times A, B) \times \C(P \times A \times B, P \times A) \\
\cong & \hint{(\blackComonoid) \mbox{ copies}} & \cong & \hint{\mbox{Universal property of the product}} \\
& \LearnerFour && \int^{P} \C(P \times A, B) \times \C(P \times A \times B, A) \times \C(P \times A \times B, P)
\end{align*}}}
\caption{From monoidal to cartesian learners.}
\end{figure}

\begin{proposition}
A pair of lenses \((U,V) \to (A,A)\) and \((V,U) \to (B,B)\) define a learner.
\end{proposition}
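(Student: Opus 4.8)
The plan is to exhibit the learner as the result of closing a feedback loop between the two lenses along their shared wires $U$ and $V$. Unfolding the definition of monoidal lens, the first lens is an element of $\int^{M}\C(U, M \otimes A) \times \C(M \otimes A, V)$, which I write as a forward part $\ell_1^{+}\colon U \to M \otimes A$ together with a backward part $\ell_1^{-}\colon M \otimes A \to V$; the second lens is an element of $\int^{N}\C(V, N \otimes B) \times \C(N \otimes B, U)$, written as $\ell_2^{+}\colon V \to N \otimes B$ and $\ell_2^{-}\colon N \otimes B \to U$. The target is the monoidal learner on $(A,B)$, namely $\int^{P,Q}\C(P \otimes A, Q \otimes B) \times \C(Q \otimes B, P \otimes A)$ from the preceding figure. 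So the goal is a natural map from the product of the two lens-sets into this learner-set.

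Graphically, I would place the two optic diagrams side by side and connect the backward output $V$ of the first lens to the forward input $V$ of the second, and the backward output $U$ of the second lens to the forward input $U$ of the first, closing a loop and leaving the focus wires $A$ and $B$ as the only external interface. Reading this open diagram off as a coend reduction, I would first use continuity together with the Fubini rule of coends to merge the two independent coends into a single $\int^{M,N}$; then apply $\varepsilon_V$ to compose $\ell_1^{-}$ with $\ell_2^{+}$ along $V$, producing a forward map $M \otimes A \to N \otimes B$; and then apply $\varepsilon_U$ to compose $\ell_2^{-}$ with $\ell_1^{+}$ along $U$, producing a backward map $N \otimes B \to M \otimes A$. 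The outcome lies in $\int^{M,N}\C(M \otimes A, N \otimes B) \times \C(N \otimes B, M \otimes A)$, which is precisely the learner-set under the relabelling $P = M$, $Q = N$. Concretely the learner produced from the pair of lenses is $(\ell_2^{+} \circ \ell_1^{-},\ \ell_1^{+} \circ \ell_2^{-})$.

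The point requiring care is well-definedness on coend classes, i.e.\ that the assignment descends through the quotients in $M$ and $N$ rather than depending on representatives. I expect this to be routine rather than the real obstacle, since every step is an instance of a transformation already established to be natural in $\Prof$: the Fubini isomorphism and the wire-composition counits $\varepsilon_V, \varepsilon_U$ are natural transformations, so they automatically respect dinaturality, and the residual variables $M, N$ of the lenses pass directly to the parameter variables $P, Q$ of the learner with no further identification forced. I would also highlight, in contrast to the twisted composition of optics in Example \ref{example:categoriesofoptics}, that no symmetry of $\C$ is used here: both compositions take place along genuine forward/backward legs, so the construction is valid in any monoidal category. The only genuinely delicate part is the bookkeeping of which leg of which lens meets the external focus wires $A, B$ as opposed to the internal loop wires $U, V$; tracking this correctly is exactly what pins down the source/target pattern $(U,V) \to (A,A)$ and $(V,U) \to (B,B)$ demanded in the statement.
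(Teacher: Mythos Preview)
Your proposal is correct and matches the paper's own argument: you unfold both lenses, merge the coends, and apply the wire-composition counits $\varepsilon_U$ and $\varepsilon_V$ to close the loop, landing in the monoidal learner set $\int^{P,Q}\C(P\otimes A, Q\otimes B)\times\C(Q\otimes B, P\otimes A)$. Your added remarks on well-definedness and the non-use of symmetry are correct and go slightly beyond what the paper spells out, but the underlying construction is identical.
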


\begin{figure}[H]
\scalebox{0.90}{\parbox{\linewidth}{
\begin{align*}
& \lensToLearnStack && \int^{M,N} \C(U,M\!\otimes\! A) \times \C(M\!\otimes\! A,V) \times \C(V, N\!\otimes\! B) \times \C(N\!\otimes\! B, U)\\
\to & \hint{\varepsilon_{U}, \varepsilon_V}  & \to & \hint{\mbox{Composition along $U$ and $V$}}\\
& \LearnerOne && \int^{P} \C(P \times A, B) \times \C(P \times A \times B, A) \times \C(P \times A \times B, P)
\end{align*}}}
\caption{From lenses to learners.}
\end{figure}
\end{document}